\pgfplotsset{compat=newest}
\DeclareMathAlphabet{\mathbbu}{U}{bbold}{m}{n}
\newcommand{\dt}{\mathrm{d}t}
\newcommand{\one}{\mathbbu{1}}
\theoremstyle{plain}
\newtheorem{theorem}{Theorem}[section]
\newtheorem{lemma}[theorem]{Lemma}
\newtheorem{proposition}[theorem]{Proposition}
\newtheorem{corollary}[theorem]{Corollary}
\theoremstyle{definition}
\newtheorem{definition}[theorem]{Definition}
\theoremstyle{remark}
\newtheorem{remark}[theorem]{Remark}
\numberwithin{equation}{section}
\newcommand{\s}{\sigma}
\renewcommand{\l}{\lambda}
\renewcommand{\t}{\vartheta}
\renewcommand{\tt}{\vartheta\nu_1}
\newcommand{\D}{\Delta}
\newcommand{\sumi}{\sum_{i=1}^\infty}
\newcommand{\limi}{\lim_{i\to\infty}}
\newcommand{\E} {{\mathbb E}}
\renewcommand{\P} {{\mathbb P}}
\newcommand{\N} {{\mathbb N}}
\newcommand{\Z} {{\mathbb Z}}
\newcommand{\pn} {{(n)}}
\newcommand{\cL} {{\mathcal L}}
\newcommand{\cR} {{\mathcal R}}
\newcommand{\cY} {{\mathcal Y}}
\newcommand{\cZ} {{Z}}
\newcommand{\ca} {{a}}
\newcommand{\cS}{\mathcal{S}}
\newcommand{\Edelta}{E_{}^\Delta}
\newcommand{\deq}{\mathrel{\mathop:}=}
\definecolor{cl}{rgb}{0.0, 0.5, 0}
\definecolor{cmu}{rgb}{0.82, 0.1, 0.26}
\definecolor{ck}{rgb}{0,0.2,0.4}
\newcommand{\dd}{{\rm d}}
\newcommand{\ts}{\hspace{0.5pt}}
\newcommand{\myfrac}[2]{\frac{\raisebox{-2pt}{$#1$}}{\raisebox{0.5pt}{$#2$}}}
\newcommand*{\affmark}[1][*]{\textsuperscript{#1}}
\title[Killing versus catastrophes in birth-death processes]{Killing versus catastrophes in birth-death processes\\ and an application to population genetics}
\author{E. Baake\affmark[1] \and F. Cordero\affmark[1,2] \and E. Di Gaspero\affmark[1] \and A. Wakolbinger\affmark[3]}
\address{\newline\affmark[1]Faculty of Technology, Bielefeld University, Box 100131, 33501 Bielefeld, Germany\newline
\affmark[2]BOKU University, Institute of Mathematics, Department of Natural Sciences and Sustainable Ressources, Gregor-Mendel-Strasse 33/II, 1180 Vienna, Austria \newline
\affmark[3]Institut f\"ur Mathematik, Goethe-Universit\"at Frankfurt, 60629 Frankfurt am Main, Germany}
\email{ebaake@techfak.uni-bielefeld.de}
\email{fernando.cordero@boku.ac.at}
\email{edigaspero@techfak.uni-bielefeld.de}\email{wakolbinger@math.uni-frankfurt.de} 
\date{\today}
\begin{document}

\begin{abstract}
We establish connections between the absorption probabilities of a class of birth-death processes with killing, and the stationary tail of a related class of birth-death processes with catastrophes. The major ingredients of the proofs are  a decomposition of the dynamics of these processes, a Feynman--Kac type relationship for Markov chains with reset and rebirth, and the concept of Siegmund duality, which allows us to invert the relationship between the processes.\\
We apply our results to a pair of ancestral processes in population genetics, namely the killed ancestral selection graph and the pruned lookdown ancestral selection graph, in a finite population setting and its diffusion limit.
\end{abstract}
\maketitle
\bigskip
\noindent \emph{keywords:} birth-death processes; Markov chains with reset and rebirth; Siegmund duality; absorption and stationary probabilities;  Moran model; Wright--Fisher diffusion; ancestral selection graph.

\bigskip

\noindent \emph{MSC2020:} primary 60J80, 60J90; secondary 60J25, 92D15.

\tableofcontents
\section{Introduction and main result}\label{S1}
Continuous-time birth-death processes appear in a large variety of contexts, from population genetics to demography, from epidemiology to queueing theory and many more. They are continuous-time Markov processes that describe the size of a population of individuals that can give birth (so that the state variable increases by one) or die (so that the state variable decreases by one). Two generalisations are the \textit{birth-death process with killing} and the \textit{birth-death process with catastrophes}. In the first case, the idea is to model a population where, regardless of the number of individuals present, the sudden extinction of the entire population is possible. The killed state is absorbing; it may either be a specific cemetery state, say \(\D\) (as in \cite{bd-killing1}), or it may coincide with \(0\), as in \cite{bd-killing2}. Birth-death processes with catastrophes on the other hand, see for example \cite{Brockwell85,Pollett01, Pollet03} (and \cite{KSN75} for a similar model in the framework of branching processes), have been used to study the dynamics of populations subject to catastrophes due to either death or large emigration events. In this case, the state may decrease by a range of values and may even move to the absorbing state \(0\), that is, the population dies out instantly. We will consider here a special case where catastrophes can induce losses of arbitrary size, but never extinguish the entire population. We will refer to it as \textit{birth-death process with non-killing catastrophes} (as for the birth-death processes with killing, the term \textit{killing} has to be understood here in terms of the entire population rather than single individuals); for brevity, will suppress the attribute `non-killing' throughout. An example is the \textit{pruned lookdown ancestral selection graph} (pLD-ASG), which was introduced to study genealogical structures in a model of population genetics \cite{LKBW15,JSP}.
\smallskip

A main motivation for this work was to shed light on an interesting relation between the absorption probabilities of $X$ and the equilibrium weights of \(\cZ\) for certain pairs $(X,Z)$, with $X$ a birth-death process with killing and $Z$ a birth-death process with catastrophes. We now introduce a few abbreviations, also with the purpose of unifying notation for the state spaces. We denote by \(\N_\infty\) the set \(\N\cup\left\{\infty\right\}\). For \(N\in\N_\infty\), the symbol \([N]\)~means the set \(\{ n \in \N : n\leq N\}\) and \([N]_0\) stands for \([N]\cup\{0\}\); in particular \([\infty]= \N\) and \([\infty]_0=\N_0\). Furthermore, for \(n,m \in \Z\), \([n:m]\coloneqq \{i \in \Z : n\leq i \leq m\}\). Let us further denote by \(\D\) an isolated cemetery state and let \([N]^\D\coloneqq [N]\cup\{\D\}\) as well as \([N]_0^\D\coloneqq [N]_0\cup\{\D\}\) for any \(N\in\N_\infty\). Notice that we allow \(N=\infty\), that is, \([N]^\D\) can be \(\N\cup\left\{\D\right\}\) and likewise for \([N]_0^\D\).

For a prescribed $N\in \mathbb N_\infty$, we now define a \emph{birth-death process with killing (or \emph{bdk} for short)} \(X:= X^N\coloneqq(X^N_t)_{t\geq 0}\) as the continuous-time Markov chain with state space \([N]_0^\D\) with jump rates
\begin{equation}\label{X_rates}
	\begin{aligned}
		q^{}_X(i,i+1) & = i\l_i, &\quad & i\in [N-1],\\
		q^{}_X(i,i-1) & = i\mu_i, &\quad &i\in [N],\quad \text{and } \\
		q^{}_X(i,\D) &= i\kappa, &\quad &i\in [N],
	\end{aligned}
\end{equation}
where \(\kappa>0\), \(\l_i > 0\) for \(i\in[N-1]\), and \(\mu_i > 0\) for \(i\in [N]\); here and in what follows, we tacitly understand jump rates that are not mentioned to be zero. For later use, we complement this with \(\l_{N}=\mu_{N+1}=0\) when \(N<\infty\). See Figure~\ref{Fig:XcZ} (top) for the transition graph and note that the extra state $\D$ is included to distinguish between the effects of killing (via parameter $\kappa$) and absorption in 0 through successive death events (via the $\mu_i$). 

\begin{figure}[b]
	\centering
	\begin{tikzpicture}[->,shorten >=1pt,auto,node distance=3cm,semithick]
		\tikzstyle{every state}=[draw=black,text=black]
		
		\node[state,minimum size=1.2cm] (0X) {0};
		\node[state,minimum size=1.2cm] (1X) [right=1.5cm of 0X] {1};
		\node[state,minimum size=1.2cm] (2X) [right=1.5cm of 1X]{2};
		
		\node							(dotsX) [right=1cm of 2X] {\(\cdots\)};
		\node[state,minimum size=1.2cm] (4X) [right=1cm of dotsX] {\(N\!-\!1\)};
		\node[state,minimum size=1.2cm] (5X) [right=1.5cm of 4X] {\(N\)};

		\node (d1X) [above=1.5cm of 1X] {\(\D\)};
		\node (d2X) [above=1.5cm of 2X] {\(\D\)};
		\node (d4X) [above=1.5cm of 4X] {\(\D\)};
		\node (d5X) [above=1.5cm of 5X] {\(\D\)};
		
		\node[state,minimum size=1.4cm,opacity=0] (downX) [below = -0.3cm of dotsX] {};
		\node[state,minimum size=1.4cm,opacity=0] (upX) [above = -0.3cm of dotsX] {};
		
		\path
		(1X) edge	[cmu]				node {\(\mu_1\)}				(0X)
		(1X) edge 	[left,ck]			node {\(\kappa\)}				(d1X)
		(2X) edge	[left,ck]			node {\(2\kappa\)}				(d2X)
		(4X) edge	[left,ck]			node {\((N-1)\kappa\)}			(d4X)
		(5X) edge	[left,ck]			node {\(N\kappa\)}				(d5X)
		(2X) edge	[bend left,cmu]		node {\(2\mu_2\)}				(1X)
		(1X) edge	[bend left,cl]		node {\(\l_1\)}					(2X)
		(4X) edge	[bend left,cl]		node {\((N-1)\l_{N-1}\)}		(5X)
		(5X) edge	[bend left,cmu]		node {\(N\mu_N\)}				(4X)
		
		(downX) edge	[out=180,in=315,cmu]	node{} 				(2X)
		(2X) edge		[in=180,out=45,cl]		node{} 				(upX)
		(4X) edge		[in=0,out=225,cmu]		node{} 				(downX)
		(upX) edge		[in=135,out=0,cl]		node{} 				(4X)
		;

	
	 	\node[state,minimum size=1.2cm] (1) [below=3cm of 0X] {1};
		\node[state,minimum size=1.2cm] (2) [below=3cm of 1X] {2};
		\node[state,minimum size=1.2cm] (3) [below=3cm of 2X] {\(3\)};
		\node							(dots) [right=1cm of 3] {\(\cdots\)};
		\node[state,minimum size=1.2cm] (4) [below=3cm of 4X] {\(N\!-\!2\)};
		\node[state,minimum size=1.2cm] (5) [below=3cm of 5X] {\(N\!-\!1\)};
		
		\node[state,white,minimum size=1.4cm,opacity=0] (down) [below = -0.3cm of dots] {};
		\node[state,white,minimum size=1.4cm,opacity=0] (up) [above = 1.4cm of dots] {};
		\node[state,white,minimum size=1.4cm,opacity=0] (center) [right = 0.7cm of 3] {};
		
		\path 
		(1)		edge	[cl]					node {\(\l_{2}\)}				(2)
		(2)		edge	[bend left,cmu]			node {\(\mu_{3}\)}				(1)
	 	(2)		edge	[cl]					node {\(2\l_{3}\)}				(3)
		(3)		edge	[bend left,cmu]			node {\(2\mu_{4}\)}				(2)
		(4)		edge	[cl]					node {\((\!N\!-\!2\!)\l_{\!N\!-\!1}\)}	(5)
		(5)		edge	[bend left,cmu]			node {\((N\!-\!2)\mu_{N}\)}		(4)

		(2)		edge	[bend right=70,ck]		node {\(\kappa\)}				(1)
		(3)		edge	[bend right=70,ck]		node {\(\kappa\)}				(1)
		(3)		edge	[bend right=70,ck]		node {\(\kappa\)}				(2)
		(5) 	edge	[bend right=70,ck]		node {\(\kappa\)}				(4)
		
		(down)	edge	[cmu,in=315,out=180]	node {}							(3)
		(3)		edge	[cl]					node {}							(center)		
		(up)	edge	[ck,out=180,in=90]		node {}							(3)
	 	(up)	edge	[ck,out=180,in=90]		node {}							(2)
		(up)	edge	[ck,out=180,in=70]		node {}							(1)
		
		(center)	edge	[cl,out=0,in=180]	node {}							(4)
		(4)		edge	[ck,out=90,in=0]		node {}							(up)
		(5)		edge	[ck,out=90,in=0]		node {\(\kappa\)}				(up)
		(4)		edge	[cmu,out=225,in=0]		node {}							(down)
		;
	\end{tikzpicture}
	\caption{The transition graphs of \(X^{N}\) (top) and \(\cZ^{N-1}\) (bottom) for finite \(N\) (see \eqref{X_rates}, \eqref{calZ_rates}, and Definition~\ref{def:paired}).
	\label{Fig:XcZ}}
\end{figure}

A \emph{birth-death process with catastrophes} on $[N-1]$ (or \emph{bdc} for short) is a process $Z\coloneqq Z^{N-1}\coloneqq (Z^{N-1}_t)_{t\geq 0}$ with jump rates
\begin{equation}\label{calZ_rates}
	\begin{aligned}
		q^{}_{\cZ}(i,i+1) & = i\lambda_{i+1}, &\quad& i\in [N-2], \\
		q^{}_{\cZ}(i,i-1) & =(i-1)\mu_{i+1}+\kappa, &\quad& i\in [2: N-1], \quad \text{and } \\
		q^{}_{\cZ}(i,j) &= \kappa \quad \mbox{ from } i \mbox{ to }j &\quad& \text{ for } i \in [3:N], j \in [i-2],
	\end{aligned}
\end{equation}
with $(\mu_i)$, $(\lambda_i)$, and $\kappa$ as in \eqref{X_rates}, see Figure~\ref{Fig:XcZ} (bottom). (We sometimes write $(\mu_i), (\lambda_i)$ and the like for sequences (or vectors) without specifying their ranges when there is no risk of confusion.) Let us anticipate that~condition~\eqref{cond_lambda} will prevent $Z$ from exploding.

In a wider sense, we will also speak of a process as a bdc if it assumes the form~\eqref{X_rates} after relabelling the states, and likewise for a bdk. Note that, individually for $X$ and $Z$, respectively, \eqref{X_rates} and \eqref{calZ_rates} leave the birth and death rates perfectly general (apart from positivity, which guarantees irreducibility of $Z$ also in the case $\kappa=0$). In contrast, we restrict ourselves to \emph{homogeneous} killing and catastrophes in the sense that $\kappa$ is constant. 

The parametrisation in \eqref{calZ_rates}, which (as discussed in the paragraph after the next) may seem a little strange at first sight, was chosen in view of the following definition and our main result, Theorem~\ref{thm_general}.

\begin{definition}\label{def:paired}
We say that a \emph{birth-death process with killing} $X=X^N$ with state space $[N]_0^\Delta$ and a \emph{birth-death process with catastrophes} $Z=Z^{N-1}$ with state space $[N-1]$ are \emph{paired} if their rates can be parametrised in the form~\eqref{X_rates} and~\eqref{calZ_rates} with \emph{the same} parameter set $(\lambda_i)$, $(\mu_i)$, and $\kappa$.
\end{definition}

The superscripts $N$ and $N-1$ are meant to remind ourselves of the state spaces (which are different for paired processes); unless stated otherwise, $(X,Z)$ stands for $(X^N, Z^{N-1})$. A note on the parametrisation of $q_{\cZ}$ is in order. For our application, or if $\cZ$ were considered in isolation, it would be more natural to replace $\lambda_{i+1}$ and $\mu_{i+1}$ by $\lambda_i$ and $\mu_i$, respectively, so that $\lambda_i$ and $\mu_i$ represent the per-capita birth and death rates of an individual in a population of size $i$ --- as in \eqref{X_rates}, except that, in $\cZ$, there is one immortal individual. Such a formulation is considered in \cite{Enricos-thesis}. In the current contribution, the parametrisation of $q_{\cZ}$ is chosen so as to streamline the mathematical treatment of the connection between $X$ and $\cZ$; for the application in Section~\ref{sec:app}, we will have to tweak the parameters a bit, at least in the finite-$N$ case.

We denote by \(b_i\) the absorption probability of \(X\) in \(0\) when starting from \(i\), that is,
\begin{equation}\label{eq:bnb}
	b_i = b_i^N\coloneqq\P(\text{\(X\) absorbs  in \(0\)}\mid X_0=i),\quad i\in [N]^\D_0,
\end{equation}
and summarise them into the vector $b \coloneqq (b_i)_{i \in [N]_0^\Delta}$.  Since \(\mu_i>0\) for every \(i\in[N]\), one has \(b_i>0\) for every \(i\in[N]_0\). In any case, a first-step decomposition of the absorption probabilities shows that the \(b_i\) satisfy the recursion
\begin{equation}\label{eq:rec_b_gen}
	(\l_i + \mu_i + \kappa)b_i = \l_i b_{i+1} + \mu_i b_{i-1} \quad \text{for }\; i\in [N-1],
\end{equation}
along with the boundary conditions \(b_0=1\) and \(b_\Delta=0\); we will see in Proposition~\ref{pSD} that, additionally, $\lim_{i \to \infty} b_i=0$ in the case $N=\infty$.

With $\cZ_{\text{eq}}$ the random equilibrium state of $\cZ$ (if an equilibrium distribution exists), denote the {\em stationary tail probabilities} by
\begin{equation}\label{tailprob}
	\ca_i = \ca_i^{N-1} \coloneqq \mathbb P(\cZ_{\text{eq}} > i), \quad i \in [N-1]_0,
\end{equation}
and summarise them into the vector $a \coloneqq (a_i)_{i \in [N-1]_0}$. The corresponding probability weights are $w_i = w_i^{N-1} \coloneqq \ca_{i-1}-\ca_i$ for $i \in [N-1]$. We will see in Corollary~\ref{cor:rec_a} that the \(\ca_i\) follow the recursion
\begin{equation}\label{eq:rec_a_gen}
	(\mu_{i+1}+\l_{i}+\kappa)\ca_i=\l_{i}\ca_{i-1}+\mu_{i+1}\ca_{i+1} \quad \text{for } \; i \in [N-2],
\end{equation}
complemented by the boundary conditions \(\ca_0=1\) together with \(\ca_{N-1}=0\) when \(N<\infty\), and \(\lim_{i\to\infty}\ca_i=0\) otherwise. Eq.~\eqref{eq:rec_a_gen} is a generalisation of (4) in \cite{LKBW15} and looks like the first-step decomposition of an absorption probability; indeed, it will emerge as such via the Siegmund dual of \(\cZ\).

As part of our main result, we will obtain, under conditions to be specified below, that
\begin{equation}\label{cor}
	\frac{b_i}{b_2} = \frac{\ca_{i-2}-\ca_{i-1}}{\ca_0-\ca_1}\prod_{j=1}^{i-2}\frac{\mu_{j+2}}{\lambda_{j+1}}, \quad i\in [2:N], 
\end{equation}
along with the complementary relationship
\begin{equation}\label{cor_complement}
	\ca_i= \frac{b_{i+1}-b_{i+2}}{b_1-b_2} \prod_{j=1}^i \frac{\lambda_{j+1}}{\mu_{j+1}}, \quad i\in [N-2]_0.
\end{equation}
These connections between $X$ and $\cZ$ are, at the same time, relations between the solutions of the recursions \eqref{eq:rec_a_gen} and \eqref{eq:rec_b_gen}.

The special case of $N=\infty$ with
\begin{equation}\label{special}
	\lambda_i := \sigma, \quad \mu_i := i-1 + \vartheta\nu_1, \quad \kappa := \vartheta \nu_0, \quad i \in \N_0,
\end{equation}
with $\sigma,\vartheta, \nu_0, \nu_1>0$ and $\nu_0 + \nu_1=1$, appears in the population genetic framework of a stationary two-type Wright--Fisher diffusion $\mathcal{Y}$ with two-way mutation at rate $\vartheta \nu_0$ from type~$1$ to type $0$, and rate $\vartheta \nu_1$ from type $0$ to type $1$, where type $0$ has selective advantage~$\sigma$. Here, $\mathcal Y$ describes the evolution of the proportion of type 1 individuals. One is specifically interested in \emph{genealogical questions}, such as the type of the {\em common ancestor}, that is, the type of the ancestor of the (entire) population in the remote past. In \cite{F02}, the weight~$p_1$ of this distribution in $1$ was computed (by mainly analytic methods). In \cite{LKBW15} and \cite{JSP}, the underlying genealogical structures were revealed, namely the killed ASG (kASG) and the pruned lookdown ASG (pLD-ASG). The kASG contains the sample’s potential ancestry back until the mutations that decide about the event that all individuals in the sample are of deleterious type, cf. \cite{JSP} (see also \cite{Sh-Uch}). The pLD-ASG contains all \emph{potential} ancestors of the population, subject to a pruning procedure that, upon mutation, eliminates lines that can never be ancestral, see \cite{LKBW15}. Notably, in the special case~\eqref{special}, the processes $X$ and $\cZ$ (with jump rates~\eqref{X_rates} and~\eqref{calZ_rates}) become the line-counting processes $\mathcal R$ and $\mathcal L$ of the kASG and of the pLD-ASG, respectively, and~\eqref{cor} as well as~\eqref{cor_complement} specialise immediately to relations between the hitting probabilities of the former and the stationary tail probabilities of the latter, see Section~\ref{diffusionlimit}. In a population with finite size $N$, the corresponding relationship is more intricate; this will be elaborated in Section~\ref{sec:Moran}.

Still in the case~\eqref{special}, the process $\mathcal R=X$ is in moment duality with the Wright--Fisher diffusion $\mathcal{Y}$, whence
\begin{equation}\label{moment_dual}
	\mathbb E[\mathcal{Y}_{\text{eq}}^i] = \beta_i,
\end{equation}
where $\mathcal{Y}_{\text{eq}}$ is the (random) equilibrium state of $\mathcal{Y}$ and $\beta_i:= b_i^{\infty}$ is as in~\eqref{eq:bnb}. In \cite{LKBW15} and \cite{JSP}, it was found that 
\begin{equation}\label{repp1}
	p_1 = \sum_i \omega_i\, \beta_i
\end{equation} 
with the $\omega_i = w_i^{\infty} $ as defined below~\eqref{tailprob} with $\mathcal L:= Z$ in case~\eqref{special} ; namely, in the light of \eqref{moment_dual}, $p_1$ is the probability that all individuals in a sample of random size $\mathcal L_{\text{eq}}$ drawn from the equilibrium Wright--Fisher population are all of type 1. 

While the connection between the  kASG and the pLD-ASG reflected in \eqref{repp1} is well understood and has been extended to a larger class of models (\cite{BCH18,BCH22,CV23,luigi,Vechambre23}, to name just a few), the connection \eqref{cor_complement} is less well known. It appears in \cite{Taylor_07} for $i=1$ and in \cite{BCdG24} for all $i$, and is used there to characterise the mutation process along the ancestral line; in both instances, it was proved by analytical means, and its probabilistic content has, so far, remained unclear. The companion relationship \eqref{cor} has, to the best of our knowledge, not appeared in the literature yet.
In the present work, we will give probabilistic proofs of~\eqref{cor} and \eqref{cor_complement} for {\em any} pair $X$, $\cZ$ with the jump rates $q^{}_X$ and $q^{}_{\cZ}$ given by~\eqref{X_rates} and~\eqref{calZ_rates}. 

\subsection{Main result}\label{Sec:results}
As already announced, the parameter $N$ can be either a natural number, or stand for $\infty$. The following two conditions will be relevant only in the case $N=\infty$: 

\begin{minipage}{6cm}
	\begin{align}\tag{1.12~a} \label{cond_lambda}
		\sum_{i=1}^\infty \frac{1}{\l_i} = \infty,
	\end{align}
\end{minipage}\hspace{3.5cm}
\begin{minipage}{6cm}
	\begin{align}\tag{1.12~b} \label{cond_mu}
		\sum_{i=1}^\infty \frac{1}{\mu_i} = \infty .
	\end{align}
\end{minipage}

\begin{theorem}\label{thm_general}
	For $N\in \mathbb N_\infty$, let $X$ and $\cZ$ be a birth-death process with killing and a birth-death process with catastrophes, respectively, paired in the sense of Definition~\ref{def:paired}.  In the case $N=\infty$, assume~\eqref{cond_lambda} and~\eqref{cond_mu}.
	\begin{itemize}
		\item[\bf A.] 
		Under these assumptions, $\cZ$ has an equilibrium distribution, and its stationary tail probabilities \((\ca_i)\)  (as given in \eqref{tailprob}) and the hitting probabilities \((b_i)\) of $X$ (as given in \eqref{eq:bnb}) obey the relationship~\eqref{cor}.
		\item[\bf B.]
		Furthermore, also the inverse relationship~\eqref{cor_complement} is valid.
	\end{itemize}
\end{theorem}

\subsection{Outline of the paper.}
The proof of part A of this theorem will be given in Section~\ref{proofpartA}. It relies on Proposition~\ref{CorrProp}, which relates certain hitting probabilities of $X$ to the stationary probability weights of $Z$ (and the $(\lambda_i)$ and $(\mu_i)$). Section~\ref{sechlrs} is devoted to more general \emph{Markov processes with reset and rebirth} and, in Proposition~\ref{keyprop}, presents a biased detailed balance relation. This result seems interesting in its own right, and indeed Section~\ref{sechlrs} can be read independently of the rest of the paper. The central concept in the proof of part~B of Theorem~\ref{thm_general} is \emph{Siegmund duality} (see \cite{Siegmund} or \cite{Jansen-Kurt}). We will use a pathwise construction that relies on a decomposition of the dynamics of $X$ and $\cZ$ in the spirit of Clifford--Sudbury flights. In Section~\ref{Sec:siegmund}, we will recall the concept and relevant facts from \cite{CS85} and \cite{ECP} and establish flight constructions for general birth-death processes with catastrophes and with killing, respectively, together with their Siegmund duals, and in Section~\ref{sec:proof_part_B}, the constructions are used to prove part B of Theorem~\ref{thm_general}. The strategy relies on the well-known property of Siegmund duality to turn the absorption probabilities of a birth-death process into stationary probabilities of its dual (see \cite{Siegmund,Cox-Rösler,van Doorn}). We will see how this translates Part~A of Theorem~\ref{thm_general} into Part~B. Indeed, the duality allows us to express \((\ca_i)\), now with the meaning of the vector of absorption probabilities of \(\cZ^\star\), the Siegmund dual of $Z$, in terms of \( (b_i)\), now with the meaning of the stationary tail probabilities of \(X^\circ\), the inverse Siegmund dual of $X$; the latter is to say that $X$ is the Siegmund dual of $X^\circ$. Figure~\ref{Fig:intro} shows the relations between the four processes and the roles played by \(\ca\) and \(b\). This is just a coarse picture; there are some subtle details hidden, in particular behind the lower link between \(X^\circ\) and $Z^*$. Specifically, some shifting of state spaces is involved, because $X^\circ$ is a bdc in the wider sense only, and because it is not paired with $Z^*$ in the sense of Definition~\ref{def:paired}; this will be clarified later. 

\begin{figure}[t]
	\centering
	\begin{tikzpicture}[->,shorten >=1pt,auto,node distance=3cm,semithick]
		\node[draw,rectangle,rounded corners=0.4cm, fill=none,minimum width=6cm,minimum height=1.5cm] (X) {\shortstack{bdk \(X\)\\ absorption probabilities \(b\)}};
		\node[draw,rectangle,rounded corners=0.4cm, fill=none,minimum width=6cm,minimum height=1.5cm] (Z) [right=3.5cm of X] {\shortstack{bdc \(\cZ\)\\ stationary tail probabilities \(\ca\) }};
		
		\node[draw,rectangle,rounded corners=0.4cm, fill=none,minimum width=6cm,minimum height=1.5cm] (Xs) [below=3cm of X] {\shortstack{bdc \(X^\circ\)\\ stationary tail probabilities \(b\) }};
		\node[draw,rectangle,rounded corners=0.4cm, fill=none,minimum width=6cm,minimum height=1.5cm] (Zs) [right=3.5cm of Xs] {\shortstack{bdk \(\cZ^\star\)\\ absorption probabilities \(\ca\)}};
		
		\draw[->] (Z) -- node[above] {\(b\) in terms of \(\ca\)} node[below] {Theorem \ref{thm_general} A} (X);
		\draw[->] (Xs) -- node[left] {\shortstack{Siegmund dual\\Proposition \ref{pSD} (2)}} (X);
		\draw[->] (Z) -- node[right] {\shortstack{Siegmund dual\\Proposition \ref{pSD} (1)}} (Zs);
		\draw[->] (Xs) -- node [above] {\(\ca\) in terms of \(b\)} node[below] {Theorem \ref{thm_general} B} (Zs);
	\end{tikzpicture}
	\caption{The connections between the paired processes $X$ and $Z$ of Definition~\ref{def:paired}, the inverse Siegmund dual $X^\circ$ of $X$, and the Siegmund dual \(\cZ^\star\) of $Z$; $X^\circ$ is a bdc in the wider sense, as specified in the paragraph below \eqref{calZ_rates}. \label{Fig:intro}}
\end{figure}

In Section~\ref{sec:app}, we detail the application to genetics, more precisely to the so-called Moran model with selection and mutation (for finite $N$) and its $N \to \infty$ limit, the aforementioned Wright--Fisher diffusion.

\section{Proof of Theorem~\ref{thm_general}~A}\label{proofpartA}
For $n \in \N_0$, let $T^X_n$ be the first time at which the process $X$ hits the state $n$. By the strong Markov property, the l.h.s. of~\eqref{cor} equals
\begin{equation*}\label{lhs}
	\frac{b_i}{b_2} = \frac{\P_i(T_0^X<\infty)}{\P_2(T_0^X<\infty)} = \prod_{n=2}^{i-1}\P_{n+1}(T_{n}^X< \infty), \quad i\in [2:N]. 
\end{equation*}
The r.h.s. of~\eqref{cor} is
\begin{equation*}\label{rhs}
	\frac{\P(\cZ_{\text{eq}} = i-1)}{\P(\cZ_{\text{eq}} = 1)}\prod_{n=2}^{i-1} \frac{\mu_{n+1}}{\lambda_{n}}= \prod_{n=2}^{i-1} \left(\frac{\P(\cZ_{\text{eq}} = n)}{\P(\cZ_{\text{eq}} = n-1)}\cdot\frac{\mu_{n+1}}{\lambda_{n}}\right).
\end{equation*}
In view of this product structure, it is thus sufficient to check the equality ``factor by factor'', and Theorem~\ref{thm_general} Part A. is immediate from the following
\begin{proposition}\label{CorrProp}
	\begin{equation*}
		\P_{n+1}(T_n^X< \infty) = \frac
		{\P(\cZ_{\rm eq} = n)}
		{\P(\cZ_{\rm eq} = n-1)}
		\cdot
		\frac{\mu_{n+1}}{\lambda_n}, \quad n\in[2: N-1].
	\end{equation*}
\end{proposition}
The proof will be given at the end of this section, prepared by a series of lemmas. A key idea is to work with the family of processes $\cZ^{(n)}, n\in [N-2]$, with the jump rates
\begin{align}
	q^{(n)}_{\cZ}(i,i+1) &:= \lambda_{i+1}, \qquad \qquad \quad \quad \, \, \, i \in [n:N-2], \nonumber \\
	q^{(n)}_{\cZ}(i,i-1) &:= \mu_{i+1}, \qquad \qquad \quad \quad \,\,\,i \in [n+2:N-1], \label{qZn_rates}\\
	q^{(n)}_{\cZ}(i,n) &:= \kappa + \mu_{i+1} \mathbbm{1}_{\left\{i=n+1\right\}}, \quad i \in [n+1:N-1]; \nonumber
\end{align}
see Figure~\ref{Fig:Zsn} for the transition graph.

\begin{figure}[h!]
	\begin{tikzpicture}[->,shorten >=1pt,auto,node distance=3cm,semithick]
		\tikzstyle{every state}=[draw=black,text=black]
		
		\node[state,minimum size=1.2cm] (1)	{\(n\)};
		\node[state,minimum size=1.2cm] (2) [right=1.5cm of 1] {\(n+1\)};
		\node[state,minimum size=1.2cm] (3) [right=1.5cm of 2] {\(n+2\)};
		\node		(dots) [right=1cm of 3] {\(\cdots\)};
		\node[state,minimum size=1.2cm] (4) [right=1.5cm of dots] {\(N\!-\!2\)};
		\node[state,minimum size=1.2cm] (5) [right=1.5cm of 4] {\(N\!-\!1\)};
		
		\node[state,white,minimum size=1.4cm,opacity=0] (down) [below = -0.3cm of dots] {};
		\node[state,white,minimum size=1.4cm,opacity=0] (up) [above = 1.4cm of dots] {};
		\node[state,white,minimum size=1.4cm,opacity=0] (center) [right = 0.7cm of 3] {};
		
		\path 
		(1)		edge	[cl]					node {\(\l_{n+1}\)}				(2)
		(2)		edge	[bend left,cmu]			node {\(\mu_{n+2}\)}			(1)
		(2)		edge	[cl]					node {\(\l_{n+2}\)}				(3)
		(3)		edge	[bend left,cmu]			node {\(\mu_{n+3}\)}			(2)
		(4)		edge	[cl]					node {\(\l_{N-1}\)}				(5)
		(5)		edge	[bend left,cmu]			node {\(\mu_{N}\)}				(4)

		(2)		edge	[bend right=70,ck]		node {\(\kappa\)}				(1)
		(3)		edge	[bend right=70,ck]		node {\(\kappa\)}				(1)
			
		(down)	edge	[cmu,in=315,out=180]	node {}							(3)
		(3)		edge	[cl]					node {}							(center)
		(up)	edge	[ck,out=180,in=70]		node {}							(1)
		
		(center)	edge	[cl,out=0,in=180]	node {}							(4)		
		(4)		edge	[ck,out=90,in=0]		node {}							(up)
		(5)		edge	[ck,out=90,in=0]		node {\(\kappa\)}				(up)
		(4)		edge	[cmu,out=225,in=0]		node {}							(down)
		;
		
	\end{tikzpicture}
	\caption{The transition graph of \(\cZ^\pn\) for finite \(N\) (see \eqref{qZn_rates} and proof of Lemma~\ref{projectionlemma}).\label{Fig:Zsn}}
\end{figure}

\smallskip
Note that the dynamics of $Z$ may be decomposed into the dynamics of the $Z^{(n)}$ in the sense that, for \(i,j\in [N-1], i \neq j\), \
\begin{equation}\label{calzdecomp}
	q_{\cZ}(i,j)=\sum_{n:\, i,j\in [n:N-1]} q^{(n)}_{\cZ}(i,j),
\end{equation}
as is readily checked. Let $\cZ^{(n)}_{\text{eq}}$ be an $[n:N-1]$-valued random variable whose law is the equilibrium distribution of the process $\cZ^{(n)}$.
\begin{lemma}\label{projectionlemma}
	For $n=2, \ldots, N-1$,
	\begin{equation*}
		\frac{\P(\cZ_{\rm eq} = n)}{\P(\cZ_{\rm eq} = n-1)} = \frac{\P(\cZ^{(n-1)}_{\rm eq} = n)}{\P(\cZ^{(n-1)}_{\rm eq} = n-1)}.
	\end{equation*}
\end{lemma}
\begin{proof}
We abbreviate $w_i:= \P(Z_{\text{eq}}=i)$, $i\in [2:N-1]$, and claim that, for all $n\in[2:N]$, the weights $w_i$ and the probability weights of $Z_{\text{eq}}^{(n-1)}$ obey the proportionality relation
\begin{equation}\label{proprel}
	\frac{w_j}{w_{j'}} =\frac{\P(Z_{\text{eq}}^{(n-1)}=j)}{\P(Z_{\text{eq}}^{(n-1)}=j')}\, ,\quad j,j'\in [n-1:N-1].
\end{equation}
Obviously,~\eqref{proprel} specialises to Lemma~\ref{projectionlemma} by choosing $(j,j'):=(n, n-1)$. To prove~\eqref{proprel}, it suffices to check that the weights $w_i$, $i\in [n-1:N-1]$, satisfy the stationarity condition (or ``balance equations'') for the jump rates $q_Z^{(n)}$, that is, 
\begin{equation}\label{rec_wn}
	\lambda_{n}w_{n-1} = \mu_{n}w_{n}+\sum_{j\in [n:N-1]}\kappa w_j,
\end{equation}
and (with the conventions $w_{N} := 0$ and $\mu_{N+1} := 0)$,
\begin{equation}\label{balanceinner}
	(\lambda_{i+1}+\mu_{i+1}+\kappa)w_i = \lambda_{i}w_{i-1} + \mu_{i+2}w_{i+1}, \quad i \in [n:N-1].
\end{equation}
For the time-stationary process $Z=(Z_t)_{t \geq 0}$ and any subset $A$ of its state space, one has 
\begin{align}\label{flows}
	\begin{split}
		\P(Z_0\notin A, Z_t\in A)&=\P(Z_t\in A)- \P(Z_t\in A, Z_0\in A) \\&= \P(Z_0\in A)- \P(Z_0\in A, Z_t\in A)=\P(Z_0\in A, Z_t\notin A), \qquad t > 0,
	\end{split}
\end{align}
which is in line with the well-known fact that the stationary probability fluxes into and out of the set~$A$ are balanced. With $A:=[n:N-1]$, and as $t\to 0$, the l.h.s.~of~\eqref{flows} equals $w_{n-1}\lambda_n t+o(t)$, while the right-hand side of~\eqref{flows} equals $\bigl(w_{n}(n-1)\mu_{n+1} +\bigl(\sum_{j\in [n:N-1]}w_j\bigr)(n-1)\kappa\bigr)t +o(t)$ as $t\to 0$. This proves~\eqref{rec_wn}. The claimed equalities~\eqref{balanceinner} then follow by subtracting two copies of \eqref{rec_wn} from each other, one with $n:= i+1$ and the other with $n:= i$. 
\end{proof}
For $n \in [N-1]$, let the jump rates $q^{(n)}_X$ be defined as
\begin{equation}
	\begin{split} \label{qXn_rates}
		q^{(n)}_{X}(i,i+1) & \coloneqq \l_i, \quad i\in [n:N-1],\\
		q^{(n)}_{X}(i,i-1) & \coloneqq \mu_i, \quad i\in [n:N],\\
		q^{(n)}_{X}(i,\D) & \coloneqq \kappa, \quad i\in [n:N].
	\end{split}
\end{equation}
Let $X^{(n)}$ be a process that follows the jump rates $q_X^{(n)}$ (see Figure \ref{Fig:Xn} for the transition graph). Note that a decomposition of the dynamics of $X$ into the dynamics of the processes $X^{(n)}$ can be done in analogy with \eqref{calzdecomp}. However, the crucial point here is a relationship between $T^X$ and $T^{X^{(n)}}_n$, where the latter is the waiting time until $X^{(n)}$ hits $n$.

\begin{figure}[t]
	\begin{tikzpicture}[->,shorten >=1pt,auto,node distance=3cm,semithick]
		\tikzstyle{every state}=[draw=black,text=black]
		
		\node[state,minimum size=1.2cm] (1) [minimum size=33]{\(n-1\)};
		\node[state,minimum size=1.2cm] (2) [right=1.5cm of 1,minimum size=33] {\(n\)};
		\node[state,minimum size=1.2cm] (3) [right=1.5cm of 2,minimum size=33] {\(n+1\)};
		\node[state,minimum size=1.2cm] (4) [right=1.5cm of 3,minimum size=33] {\(n+2\)};
		\node	(dots) [right=1cm of 4] {\(\cdots\)};
		\node[state,minimum size=1.2cm] (5) [right=1cm of dots,minimum size=33] {\(N\)};
		\node (d2) [above=1.5cm of 2] {\(\D\)};
		\node (d3) [above=1.5cm of 3] {\(\D\)};
		\node (d4) [above=1.5cm of 4] {\(\D\)};
		\node (d5) [above=1.5cm of 5] {\(\D\)};
		
		\node[state,white,minimum size=1.4cm] (down) [below = -0.3cm of dots] {};
		\node[state,white,minimum size=1.4cm] (up) [above = -0.3cm of dots] {};
		
		\path 
		(2) edge	[left,ck]			node {\(\kappa\)}			(d2)
		(3) edge	[left,ck]			node {\(\kappa\)}			(d3)
		(4) edge	[left,ck]			node {\(\kappa\)}			(d4)
		(5) edge	[left,ck]			node {\(\kappa\)}			(d5)
		(2) edge	[cmu]				node {\(\mu_n\)}			(1)
		(3) edge	[bend left,cmu]		node {\(\mu_{n+1}\)}		(2)
		(2) edge	[bend left,cl]		node {\(\l_n\)}				(3)
		(3) edge	[bend left,cl]		node {\(\l_{n+1}\)}			(4)
		(4) edge	[bend left,cmu]		node {\(\mu_{n+2}\)}		(3)
		
		(down) edge	[out=180,in=315,cmu]		node{} 		(4)
		(4) edge	[in=180,out=45,cl]		node{} 				(up)
		(5) edge	[in=0,out=225,cmu]		node{} 				(down)
		(up) edge	[in=135,out=0,cl]		node{} 			(5)
		
		;
		
	\end{tikzpicture}
	\caption{The transition graph of \(X^\pn\) for finite \(N\) (see \eqref{qXn_rates} and proof of Lemma~\ref{hitproblemma}).\label{Fig:Xn}}
\end{figure}

\begin{lemma}\label{hitproblemma}
	\begin{equation*}
		\P_{n+1}(T_n^X< \infty) = \P_{n+1}(T_n^{X^{(n)}}< \infty), \quad n\in [N-1].
	\end{equation*}
\end{lemma}
\begin{proof}
	This is immediate from the fact that the discrete-time embeddings of the processes $X$ and $X^{(n)}$ have the same transition probabilities on the set $[n+1 : N]$.
\end{proof}
For $n \in [N-1]$, let the jump rates $q_W^{(n)}$ be defined as
\begin{equation*}
	\begin{split}
		q^{(n)}_{W}(i,i+1) & \coloneqq \l_{i+1}, \quad i\in [n:N-2], \\
		q^{(n)}_{W}(i,i-1) & \coloneqq \mu_{i+1}, \quad i\in [n+1:N-1],
	\end{split}
\end{equation*}
that is, $q^{(n)}_{W}$ arises from $q^{(n)}_{\cZ}$ by setting $\kappa =0$. Let $W^{(n)}$ be a process that follows the jump rates $q_{W}^{(n)}$, and let $T^{{W}^{(n)}}_n$ be the first time at which $W^{(n)}$ hits the state $n$. Also, let $W^{(n)}_{\text{eq}}$ be the random equilibrium state of $W^{(n)}$.
\begin{remark}
	\begin{itemize}
		\item[a)]
		Noting that --- except for the killing part in $q_X^{(n)}$ --- the jump rates $q^{(n)}_{X}$ and $q^{(n)}_{W}$ differ (as long as $X$ is in \([n+1:N]\)) only by a simple index shift, we immediately obtain the following Feynman--Kac representation of the hitting probabilities~of Lemma~\ref{hitproblemma}:
		\begin{equation}\label{FK}
			\P_{n+1}\left(T_n^{X^{(n)}}< \infty\right)= \E_n\left[\exp\left(-\kappa T_{n-1}^{{W}^{(n-1)}}\right)\right].
		\end{equation}
		\item[b)] From the detailed balance equation, we obtain
		\begin{equation}\label{detbal}
			\frac{\P(W^{(n-1)}_{\text{eq}}=n)}{\P(W^{(n-1)}_{\text{eq}}=n-1)} = \frac{\lambda_n}{\mu_{n+1}}. 
		\end{equation}
	\end{itemize}
\end{remark}
The remaining piece for completing the proof of Proposition~\ref{CorrProp} is given by the following lemma, which is a special case of Proposition~\ref{keyprop} stated in the next section. There we formulate and prove this proposition in a more general framework, since this may be of independent interest. 
\begin{lemma}\label{lemFKWZ}For \(n\in [2:N-1]\),
	\begin{equation*}
		\frac{\P(W^{(n-1)}_{\rm eq}=n)}{\P(W^{(n-1)}_{\rm eq}=n-1)}\E_n\left[\exp\left(-\kappa T_{n-1}^{{W}^{(n-1)}}\right)\right] = \frac{\P(\cZ^{(n-1)}_{\rm eq}=n)}{\P(\cZ^{(n-1)}_{\rm eq}=n-1)}.
	\end{equation*}
\end{lemma}
\begin{proof} 
	This follows by identifying the state space of the lemma with the state space $S$ of Proposition~\ref{keyprop}, and in particular the pair $(n-1, n)$ in the lemma with the pair $(\omega,\alpha)$ in the proposition. The role of $W^{(n)}$ is taken by $\left(\mathcal W, (\P^0)_{i\in S}^{}\right)$, and that of $\cZ^{(n)}$ by $\left(\mathcal W, (\P^\kappa)_{i\in S}^{}\right)$ in Proposition~\ref{keyprop}.
\end{proof}
\begin{proof}[Proof of Proposition~\ref{CorrProp}]
The assertion now follows by a straightforward combination of Lemmas~\ref{hitproblemma} and~\ref{lemFKWZ} with~\eqref{FK} and \eqref{detbal}.
\end{proof}
\section{Markov chains with reset and rebirth, and a biased detailed-balance relation}\label{sechlrs}
For a finite or countably infinite state space~$S$, consider jump rates $q^0(i,j)$ that belong to an irreducible positive recurrent continuous-time Markov chain on $S$, and let $\left(\mathcal W, (\P^0_i)_{i\in S}^{}\right)$ be its canonical model, with $\P_i^0(\mathcal W_0=i)=1$; a prototype example is the process $W^{(n)}$ defined after Lemma~\ref{hitproblemma}. Assume that~$S$ has two distinguished elements, which we call $\omega$ and $\alpha$ and for which
\begin{equation*}
	q^0_{}(\omega,i) = q^0_{}(i,\omega) =0 \mbox{ for } i \in S\setminus\{\omega, \alpha\}.
\end{equation*}
For $\kappa \ge 0$, let the jump rates $q^{\kappa}$ be given by
\begin{equation*}
	q^\kappa(i,\omega) =
	\begin{cases}
		q^0(\alpha,\omega)+ \kappa, \quad & i=\alpha,\\
		\kappa, & i \in S\setminus \{\omega,\alpha\},
	\end{cases}
	\qquad q^\kappa(i,j) = q^0_{}(i,j) \mbox{ for } j \neq \omega.
\end{equation*}
A stylised version of the transition graph  is depicted in Figure~\ref{Fig:qkappa}; the set $S\setminus \{\omega\}$ is encircled in red.

\begin{figure}[h]
	\begin{tikzpicture}[->,shorten >=1pt,auto,node distance=3cm,semithick]
		\tikzstyle{every state}=[draw=black,text=black]
		
		\node[state,minimum size=1cm] (1) {\(\omega\)};
		\node[state,minimum size=1cm] (2) [right=2.5cm of 1] {\(\alpha\)};
		\node[state,minimum size=1cm] (3) [right=1.5 of 2] {\(\eta\)};
		\node		(dots) [right=1cm of 3] {\(\cdots\)};
		\node[state,minimum size=1cm] (4) [right=1.5 of dots] {\(\xi\)};
		
		\draw[rounded corners=0.8cm, thick, red] ($(2.north west)+(-0.4,0.4)$) --
			($(4.north east)+(0.4,0.4)$) --
			($(4.south east)+(0.4,-0.4)$) --
			($(2.south west)+(-0.4,-0.4)$) --
			cycle;
		
		\path 	
		(2)		edge	[bend right=70,ck]		node {\(\kappa\)}				(1)
		(3)		edge	[bend right=70,ck]		node {\(\kappa\)}				(1)
		(4)		edge	[bend right=70,ck]		node {\(\kappa\)}				(1)
		(1)		edge	[]						node {}							(2)
		(2)		edge	[]						node {}							(1)
		;
	\end{tikzpicture}
	\caption{\label{Fig:qkappa} The transition graph corresponding to the jump rates \(q^\kappa\); arrows representing transitions between states inside the red bubble are omitted.}
\end{figure}

Let $\P^\kappa_i$ be the law induced by $\mathcal W$ when starting in $i$ and following the transition rates $q^\kappa$. In words, under the law $\P_i^{\kappa}$, the process $\mathcal W$ has, from any of its states, a jump rate $\kappa$ to $\omega$, in addition to the jump rate $q^0(\alpha,\omega)$ from $\alpha$ to $\omega$, which is present also under $\P_i^0$. As under $\P_i^0$, all the excursions of $\mathcal W$ from $\omega$ have to make their first step to $\alpha$; this is why we call $\left(\mathcal W, (\P^\kappa_i)_{i\in S}^{}\right)$ a chain with {\em (homogeneous) reset} (to state~$\omega$) and {\em rebirth} (via state $\alpha$).

Let $T_\alpha$ and $T_\omega$ be the first hitting times of the states $\alpha$ and $\omega$, respectively, and let $\pi^\kappa_{}(i)$, $i\in S$, denote the stationary probability weights for~$q^\kappa$. 
\begin{proposition}\label{keyprop}
	The ratios of the stationary probability weights in $\alpha$ and $\omega$ obey the following relationship (of Feynman--Kac type): 
	\begin{eqnarray*}
		\frac{\pi^0_{}(\alpha)}{\pi^0_{}(\omega)}\E_\alpha^0\left[\exp\left(-\kappa T_\omega\right)\right] = \frac{\pi^\kappa_{}(\alpha)}{\pi^\kappa_{}(\omega)}. 
	\end{eqnarray*}
\end{proposition}
\begin{proof}
	We will first show that, for all $i \in S$,
	\begin{equation} \label{eq:2}
		\pi^\kappa(i) = \frac{\mathbb{E}^\kappa_\omega \bigl[ \int^{R_\omega}_0 \one \{\mathcal W_t =i \}\, \dt \bigr]}{\mathbb{E}^{\kappa}_\omega (R_\omega)}\, ,
	\end{equation}
	where $R_\omega$ is the return time to $\omega$. The ergodic theorem tells us that
	\begin{equation}\label{ergodic}
		\frac{1}{t} \int^t_0 \one \{ \mathcal W^{}_t =i\} \dt \xrightarrow{t\to\infty} \pi^\kappa(i)\qquad \P_\omega^\kappa \mbox{ - a.s.}
	\end{equation}
	On the other hand, the renewal reward theorem says that the l.h.s. of~\eqref{ergodic} converges, $\P_\omega^\kappa$ - a.s. as $t\to \infty$, to the r.h.s. of~\eqref{eq:2}. Together with~\eqref{ergodic}, this proves~\eqref{eq:2}. 
	
	Since the holding times in $\omega$ have the same expectation under $\P_\omega^0$ and under $\P_\omega^\kappa$\,, we observe that
	\begin{equation*}
		\mathbb{E}^{0}_\omega \biggl[ \int^{R_\omega}_0 \one \{ \mathcal W^{}_t =\omega\} \,\dt \biggr] = \mathbb{E}^{\kappa}_\omega \biggl[ \int^{R_\omega}_0 \one \{\mathcal W^{}_t =\omega\}\, \dt \biggr].
	\end{equation*}
	This, combined with \eqref{eq:2} (which also holds for $\kappa = 0$), leads to
	\begin{equation} \label{eq:3}
		\frac{\frac{\pi^{\kappa}_{}(\alpha)}{\pi^{\kappa}_{}(\omega)}}{\frac{\pi^{0}_{}(\alpha)}{\pi^{0}_{}(\omega)}} = \frac{\mathbb{E}^{\kappa}_\omega \bigl[ \int^{R_\omega}_0 \one \{ \mathcal W^{}_t = \alpha\} \dt \bigr]}{\mathbb{E}^{0}_\omega \bigl[ \int^{R_\omega}_0 \one \{ \mathcal W^{}_t = \alpha\} \dt \bigr] }\, .
	\end{equation}
	Let us now consider a Poisson point process $\Pi$ with intensity $\kappa$ on $\mathbb R_+$, which is defined on a suitable enlargement of $\P^0_\omega$, under which it is independent of~$\mathcal W$. Let $T^\Pi$ be the first Poisson point after $T_\alpha$, so that $T:= T^\Pi -T_\alpha$ is Exp($\kappa$)-distributed and independent of $R_\omega$ both under $\P_\omega^0$ and under $\P_\alpha^0$. Thanks to the definition of $q^\kappa$, the return time $R_\omega$ under~$\P^\kappa_\omega$ has the same distribution as $T^\Pi\wedge R_\omega$ under~$\P^0_\omega$. Hence, with $\cS_t$ denoting the sojourn time of $\mathcal W$ in $\alpha$ until time $t$, the numerator of the RHS of \eqref{eq:3} becomes
	\begin{equation} \label{eq:5}
		\mathbb{E}^{\kappa}_\omega \biggl[ \int^{R_\omega}_0 \one \{\mathcal W^{}_t = {\alpha}\}\, \dt \biggr]= \mathbb{E}^{0}_\omega \left[\cS^{}_{T^\Pi\wedge R_\omega}\right] =\E_\alpha^0\left[\cS_{T\wedge R_\omega}\right] .
	\end{equation}
	Similarly, the denominator of the RHS of \eqref{eq:3} turns into
	\begin{equation} \label{eq:6}
		\mathbb{E}^{0}_\omega \biggl[ \int^{R_\omega}_0 \one \{ \mathcal W^{}_t =\alpha\} \dt \biggr] = \mathbb{E}^{0}_\omega [\cS^{}_{R_\omega}] = \mathbb{E}^{0}_\alpha [\cS^{}_{R_\omega}] .
	\end{equation}
	The above-stated properties of the random time $T$ give
	\begin{equation} \label{eq:7}
		\mathbb{E}^{0}_\alpha [e^{-\kappa R_\omega}] = \mathbb{P}^{0}_\alpha [R_\omega < T]\, .
	\end{equation}
	Thus equations \eqref{eq:3}--\eqref{eq:7} yield the following reformulation of Proposition~\ref{keyprop}:
	\begin{equation} \label{eq:8}
		\E^{0}_\alpha \bigl[\cS^{}_{T\wedge R^{}_\omega}\bigr] = \E^{0}_\alpha \bigl[\cS^{}_{R^{}_\omega}\bigr] \P^{0}_\alpha \left[R^{}_\omega <T\right].
	\end{equation}
	The l.h.s.\ of~\eqref{eq:8} can be decomposed as 
	\begin{align}
		\E^{0}_\alpha\left[\cS^{}_{T\wedge R_\omega}\right]\nonumber
		&=\E^{0}_\alpha\left[\cS^{}_{R^{}_\omega} \mid T>R^{}_\omega\right] \,\P^{0}_{\alpha}\left[T>R^{}_\omega\right] \nonumber\\
		&\hspace{1cm} + \E^{0}_\alpha\left[\cS^{}_{T}\mid T<R^{}_\omega\right] \, \P^{0}_{\alpha}[T<R^{}_\omega]\nonumber\\
		&= \E^{0}_\alpha\left[\cS^{}_{R^{}_\omega}\mid T>R^{}_\omega\right]\, \P^{0}_{\alpha}[T>R^{}_\omega] \nonumber\\
		&\hspace{1cm} +\E^{0}_\alpha\left[\cS^{}_{R^{}_\omega} - (\cS^{}_{R^{}_\omega} - \cS^{}_{T}) \mid T<R^{}_\omega\right]\, \P^{0}_{\alpha}[T<R^{}_\omega]\nonumber\\
		&= \E^{0}_\alpha\left[\cS^{}_{R^{}_\omega}\right]- \E^{0}_\alpha\left[\cS^{}_{R^{}_\omega} - \cS^{}_{T} \mid T<R^{}_\omega\right]\, \P^{0}_{\alpha}[T<R^{}_\omega]. 
		\label{eq:9}
	\end{align}
	On the event $\{T<R_\omega\}$, one has $\mathcal W_T \neq \omega$ \,$\mathbb{P}_\alpha^0$-a.s. Under the $q^0$-dynamics, the return to $\omega$ leads a.s.\ via the state $\alpha$, and collecting sojourn time in $\alpha$ after time $T$ can start only at the subsequent first hit of $\alpha$. Hence the strong Markov property gives
	\begin{equation*}\label{eq:10}
		\E^{0}_\alpha\bigl[ \cS^{}_{R^{}_\omega} - \cS^{}_{T} \mid T<R^{}_\omega\bigr] = \E^{0}_\alpha[\cS^{}_{R^{}_\omega}].
	\end{equation*}
	Plugging this into~\eqref{eq:9} immediately gives~\eqref{eq:8} and thus proves the proposition.
\end{proof}

\begin{remark}
	Note that, under $\mathbb{P}_i^0$, the stationary probability fluxes between $\omega$ and $\alpha$ follow the detailed-balance equation $\pi^0(\omega)q^0(\omega, \alpha) =\pi^0(\alpha)q^0(\alpha, \omega)$. Combining this with~Proposition~\ref{keyprop} tells us that
	\begin{equation*}
		\pi^\kappa(\alpha)q^0(\alpha,\omega) =\pi^\kappa(\omega)q^0( \omega,\alpha) \E_\alpha^0\bigl [\exp (-\kappa T_\omega ) \bigr],
	\end{equation*}
	which reflects the biasing of the stationary probability fluxes out of and into the state $\omega$, caused by the $\kappa$-transitions under $\mathbb{P}_i^\kappa$. Note also that, while the transitions within $S \setminus \{\omega \}$ are assumed irreducible but may otherwise be arbitrary, the result and its proof crucially rely on the constant resetting rate $\kappa$ from all states in $S \setminus \{\omega \}$ to $\omega$. \hfill
\end{remark}

\section{Flights and Siegmund duality}\label{Sec:siegmund}
Having established Part A of Theorem~\ref{thm_general}, we now lay the groundwork for proving Part B. The concept of \emph{Siegmund duality} allows us to interpret the desired relation as an instance of the first part, by introducing new processes that interchange the roles of \(X\) and \(\cZ\).

Let \( \mathcal{Z} = (\mathcal{Z}_t)_{t \ge 0} \) be a right-continuous, stochastically monotone Markov process with (possibly infinite) state space \( E \subseteq \mathbb{N}^\Delta \), where \(\mathbb{N}^\Delta\) is equipped with the usual order on \(\mathbb{N}\), extended by \(i < \Delta\) for all \(i \in \mathbb{N}\). If \( \Delta \in E \), we assume it is absorbing. In contrast to $Z$ under \eqref{cond_lambda}, $\mathcal{Z}$ is not safe against explosion; if it does explode, we send it to \( \Delta \) immediately after the first explosion time. For convenience, we adopt the convention that \( \Delta \in E \) only when \( \mathcal{Z} \) is explosive. As shown by Siegmund~\cite{Siegmund}, such a process admits a \emph{Siegmund dual} \( \mathcal{Z}^\star = (\mathcal{Z}_t^\star)_{t \ge 0} \), which is a Markov process on \(\Edelta \coloneqq E \cup \{\Delta\}\) satisfying the duality relation
\begin{equation}\label{sduality}
	\mathbb{P}(\mathcal{Z}_t \ge j \mid \mathcal{Z}_0 = i)
	= \mathbb{P}(i\geq \mathcal{Z}_t^\star \mid \mathcal{Z}_0^\star = j),
	\qquad i \in E,\; j \in \Edelta .
\end{equation}
Thus the semigroups of \( \mathcal{Z} \) and \( \mathcal{Z}^\star \) determine each other in a unique way. Moreover, the transition rates of \( \mathcal{Z}^\star \) can be expressed explicitly in terms of those of \( \mathcal{Z} \). The (rates of the) Siegmund dual may be calculated directly via Theorem~2 in \cite{Siegmund}, as also done in \cite{Enricos-thesis}; this is a straightforward but somewhat tedious task. We adopt an alternative approach here, which is based on the \emph{pathwise construction} of Siegmund duals as introduced by Clifford and Sudbury \cite{CS85}.

We begin by briefly recalling the building blocks of Clifford and Sudbury’s construction. These are the so-called \textit{flights}, which are non-decreasing functions \( f: E \to E \), that is, functions that preserve the order in \(E\). They will be the building blocks for the pathwise construction of the dual.

\subsection{Flight representations and duality}\label{ss:4.1}
The results of Clifford and Sudbury~\cite{CS85} provide a systematic method to construct, on a common probability space, versions of our process \( \mathcal{Z} \) starting from all possible states in \(E\), while almost surely preserving the order of their initial states. A central object in this construction is the set of flights (on \(E\)), that is,
\begin{equation*}
	\mathcal{F}_E \coloneqq \{\, f : E \to E \mid f \text{ is non-decreasing} \,\}.
\end{equation*}
Given a measure \(\gamma\) on \(\mathcal{F}_E\) satisfying
\begin{equation}\label{gammafin}
	\gamma^{(k)} \coloneqq \gamma\left(\{f \in \mathcal{F}_E \mid f(k) \neq k\}\right) < \infty \quad \text{for all } k \in E,
\end{equation}
let \(\Phi = \Phi_\gamma\) be a Poisson random measure on \([0,\infty) \times \mathcal{F}_E\) with intensity measure \({\rm d}t \times \gamma({\rm d}f)\). For any initial state \(\ell \in E\), the random flight configuration \(\Phi\) serves as a routing instruction for the path \(\mathcal{Z}_\Phi = \mathcal{Z}_{\Phi,\ell}\) defined as follows: When \(\mathcal{Z}_\Phi\) is in state \(k\) at time \(s\), find the point \((u,f)\) in the support of \(\Phi\) with the smallest \(u > s\) such that \(f(k) \neq k\); due to~\eqref{gammafin}, such a point exists almost surely. Then set \(\mathcal{Z}_\Phi(t) \coloneqq k\) for \(s \le t < u\), and \(\mathcal{Z}_\Phi(u) \coloneqq f(k)\). Iterating this procedure yields, for every \(\ell \in E\), a random sequence \(0 < T_1 < T_2 < \cdots\) of jump times. On the event \(\{\sup_{i}T_i < \infty\}\), we set \(\mathcal{Z}_\Phi(t) \coloneqq \Delta\) for all \(t \ge T_\infty \coloneqq \sup_i T_i\).

It is immediate from the construction that the resulting \(E\)-valued process \(\mathcal{Z}_\Phi = (\mathcal{Z}_{\Phi,\ell}(t))_{t \ge 0}\), for each \(\ell \in E\), is Markovian, stochastically monotone, and right-continuous. The jump rates of \(\mathcal{Z}_\Phi\) are given by
\begin{equation}\label{qij}
	q_{i,j} = \gamma\left(\{f \in \mathcal{F}_E \mid f(i) = j\} \right), \quad i, j \in E,\, i \neq j.
\end{equation}
We call \(\mathcal{Z}_\Phi\) the {\em process routed by} \(\Phi = \Phi_\gamma\).

A flight \(f\) can be visualised as a set of simultaneous arrows pointing from every \(j \in E\) to \(f(j)\), so that the process routed by \(\Phi_\gamma\) simply ``follows the arrows''; see~\cite{ECP} for an illustration.

Clifford and Sudbury \cite{CS85} show that, for any stochastically monotone and right-continuous Markov process \((\mathcal{Z}, \mathbb{P}_\ell)_{\ell \in E}\) that satisfies \(\mathcal{Z}_0 = \ell\) a.s.\ under \(\mathbb{P}_\ell\), there exists a {\em flight representation}, i.e.\ a Poisson random measure \(\Phi = \Phi_\gamma\) such that \(\mathcal{Z}_{\Phi,\ell}\) (constructed as above) has the same distribution as \(\mathcal{Z}\) under \(\mathbb{P}_\ell\) for all starting values \(\ell\). An explicit construction of the underlying Poisson random measure \(\Phi\) is given in~\cite{CS85}.

The flight representations of such Markov processes $\mathcal{Z}$ are not necessarily unique. In our examples, we will see that there is a natural and intuitive way to construct the corresponding Poisson measures of flights. 

Now, let us explain how to construct the Siegmund dual starting from the flight representation. The first ingredient is the notion of \emph{dual flight}: if \(f\) is a flight, its \emph{dual flight} is the map \(f^\star : \Edelta \to \Edelta\) defined by
\begin{equation*}\label{eq:dual_flight}
	f^\star(j) = \min \{\, i \in E : j \leq f(i) \,\},
\end{equation*}
where we adopt the convention that \(\min \varnothing = \Delta\), compare \cite[Fig.~3]{ECP} for an example of the graphical picture. In situations where \(\{ i \in E : j \leq f(i) \}\) is non-empty for all flights $f$ under consideration, the dual flights can be equivalently restricted to \(E\), and the inclusion of \(\Delta\) becomes unnecessary.

Let \(\Phi = \Phi_\gamma\) be a Poisson random measure satisfying condition~\eqref{gammafin}, and let \(\mathcal{Z}_\Phi\) be the process routed by \(\Phi\). We also fix a finite time horizon \(T > 0\) and denote by \(\Phi^{T}\) the restriction of \(\Phi\) to \([0, T] \times \mathcal{F}_E\). Finally, we define \(\Phi^{T,\star}\) as the Poisson measure on \([0, T] \times \mathcal{F}_{\Edelta}\) obtained as the pushforward of \(\Phi^{T}\) under the map \((s, g) \mapsto (T - s, g^\star)\). By construction, \(\Phi^{T,\star}\) has intensity \(\mathrm{d}t \times \gamma^\star(\mathrm{d}f^\star)\), where \(\gamma^\star\) is the pushforward of \(\gamma\) under the map \(f \mapsto f^\star\). Note that
\begin{equation*}
	f^\star(k) \neq k \;\Rightarrow\; f(i) \neq i \quad \text{for some } i \leq k,
\end{equation*}
and hence \(\gamma^\star\) satisfies condition~\eqref{gammafin}. Therefore, the process \(\mathcal{Z}_{\Phi^{T,\star}}\) routed by \(\Phi^{T,\star}\) is well defined. Moreover, it is not difficult to check that we almost surely have, for all \(t \in [0, T]\) and all \(n,m \in E\),
\begin{equation*}\label{pathwise-dual}
	\mathcal{Z}_{\Phi^{T},n}(T) \geq m
	\;\iff\;
	\mathcal{Z}_{\Phi^{T},n}(t) \geq \mathcal{Z}_{\Phi^{T,\star},m}((T - t)-)
	\;\iff\;
	n \geq \mathcal{Z}_{\Phi^{T,\star},m}(T-).
\end{equation*}
We refer to this relation as a \emph{pathwise Siegmund duality}. Note that fixing a finite time horizon is required only for this stronger notion of duality. If we are interested solely in the Siegmund duality~\eqref{sduality}, we may define \(\Phi^\star\) on \([0,\infty) \times \mathcal{F}_{\Edelta}\) as the Poisson measure with intensity \(\mathrm{d}t \times \gamma^\star(\mathrm{d}f^\star)\). In this case, the pathwise Siegmund duality implies the Siegmund duality \eqref{sduality} between the process \(\mathcal{Z}_\Phi\) and the process \(\mathcal{Z}_{\Phi^\star}\).

In the next section, we construct flight representations for a family of general birth–death processes with catastrophes.

\subsection{Flight representation for bdc processes}
Assume that we are given the following set of parameters: \(\kappa>0\), \(\ell_i \geq 0\) for $i>0$, and \(m_i\geq 0\) for \(i>1\). For $N \in \mathbb{N}_\infty$, let \(\mathcal{Z}^{N-1}\) be a bdc with jump rates \(q^{N-1}\) parametrised as follows: 
\begin{equation}\label{defq}\begin{split}
	\begin{aligned}
		q^{N-1}(i,i+1) & = \ell_i, &\quad & i\in [N-2],\\
		q^{N-1}(i,i-1) & = m_i + \kappa, &\quad &i\in [2:N-1], \\
		q^{N-1}(i,j) &= \kappa, &\quad &i\in [3:N-1], \, j\in [i-2].
	\end{aligned}
	\end{split}
\end{equation}
We take the state space of \(\mathcal{Z}^{N-1}\) to be \([N-1]^\Delta\), where the additional state \(\Delta\) is absorbing and can be reached only through explosion. Apart from this addition, $\mathcal{Z}^{N-1}$ is a bdc in the sense of \eqref{calZ_rates} up to a reparametrisation, which comes in handy as long as we do not consider paired processes.

We define the following flights on $\N_0^\Delta$:
\begin{itemize}
	\item For $i\in \mathbb N$, let $f_{i\uparrow}$ be the flight that sends $i$ to $i+1$ and leaves all $j\neq i$ unchanged.
	\item For $i \in [2:\infty]$, let $f_{i\downarrow}$ be the flight that sends $i$ to $i-1$ and leaves all $j\neq i$ unchanged.
	\item For $k \in \mathbb N$, let $f_{\downarrow k}$ be the flight that sends $j$ to $k$ for \emph{all} $j \in [k+1:\infty]$ and leaves all $j\le k$ and $j=\Delta$ unchanged. 
\end{itemize}
For $2\le N\le \infty$, we define the measure $\gamma_{N-1}$ as follows:
\begin{equation*}
	\gamma_{N-1} \coloneqq \sum_{i=1}^{N-2} \ell_i \delta_{f_{i\uparrow} |_{[N-1]^{\Delta}}^{}}+ \sum_{i=2}^{N-1} m_i\delta_{f_{i\downarrow} |_{[N-1]^{\Delta}}^{}} +\kappa \sum_{k=1}^{N-2}\delta_{f_{\downarrow k}|_{[N-1]^{\Delta}}^{}},
\end{equation*}
where $\vert$ denotes the restriction of a flight to a subset of its domain. Let $\Phi_{N-1}$ be a Poisson process with intensity measure ${\rm d}t\times \gamma_{N-1}({\rm d}f)$. It is readily checked that
\begin{align*}
	\gamma_{N-1}(\{f\in \mathcal F_{[N-1]^{\Delta}} \mid f(i)=i+1\}) &= \ell_i, &\quad& i\in [N-2], \\
	\gamma_{N-1}(\{f\in \mathcal F_{[N-1]^{\Delta}} \mid f(i)=i-1\})&= m_i+\kappa, &\quad& i\in [2:N-1], \\
	\gamma_{N-1}(\{f\in \mathcal F_{[N-1]^{\Delta}} \mid f(i)=j\})& = \kappa, &\quad& i\in [3:N-1], \, j\in [i-2]. 
\end{align*}
By ~\eqref{qij}, therefore, the process ${\color{black}{\mathcal{Z}_{\Phi_{N-1}}}}$ routed by $\Phi_{N-1}$ has the jump rates~\eqref{defq}, so $\Phi_{N-1}$ provides a flight representation of ${\color{black}{\mathcal{Z}^{N-1}}}$.

\subsection{Siegmund duality between bdc and bdk processes}
The dual flights of \(f_{i\uparrow}\), \(f_{i\downarrow}\), and \(f_{\downarrow k}\), as defined at the beginning of the previous subsection, are given by
\begin{equation*}
	f_{i\uparrow}^\ast = f_{(i+1)\downarrow}, \qquad 
	f_{i\downarrow}^\ast = f_{i\uparrow}, \qquad
	f_{\downarrow k}^\ast = f_{(k+1)\uparrow \Delta},
\end{equation*}
where
\begin{itemize}
	\item for \(k \in \mathbb{N}\), \(f_{k\uparrow \Delta}\) denotes the flight that sends
	\(j\) to \(\Delta\) for all \(j \in [k:\infty]\) and leaves all \(j < k\) and \(j = \Delta\) unchanged.
\end{itemize}

For restrictions to \( [N-1]^\Delta \), we obtain the analogous relations for \(i,k \in [N-2]\):
\begin{equation*}
	(f_{i\uparrow}\mid_{[N-1]^\Delta})^\ast = 
		f_{(i+1)\downarrow}\mid_{[N-1]^\Delta}, 
	\qquad
	(f_{i\downarrow}\mid_{[N-1]^\Delta})^\ast = 
		f_{i\uparrow}\mid_{[N-1]^\Delta}, 
	\qquad
	(f_{\downarrow k}\mid_{[N-1]^\Delta})^\ast = 
		f_{(k+1)\uparrow \Delta}\mid_{[N-1]^\Delta}.
\end{equation*}
These relations are complemented by
\begin{equation*}
	(f_{(N-1)\downarrow}\mid_{[N-1]^\Delta})^\ast
	= f_{(N-1)\uparrow \Delta}\mid_{[N-1]^\Delta},
\end{equation*}
which can be seen from
\begin{equation*}
	\left(f_{(N-1)\downarrow} \mid_{[N-1]^\Delta}\right)^\star(N-1)
		= \min\{\, i \in [N-1]^\Delta : N-1 \le f_{(N-1)\downarrow}(i) \,\}
		= \Delta.
\end{equation*}
Thus, the push-forward $\gamma_{N-1}^\star$ of $\gamma_{N-1}$ under the mapping $f\mapsto f^\star$ is
\begin{equation*}
	\begin{split}
		\gamma_{N-1}^\star &= \sum_{i=1}^{N-2} \ell_i \delta_{f_{(i+1)\downarrow} |_{[N-1]^\Delta}^{}}+ \sum_{i=2}^{N-2} m_i\delta_{f_{i\uparrow} |_{[N-1]^\Delta}^{}} + \sum_{k=1}^{N-2}(\kappa+\mathbbm{1}_{\{k=N-2\}}m_{N-1})\,\delta_{f_{(k+1)\uparrow \Delta}|_{[N-1]^\Delta}^{}}.
	\end{split}
\end{equation*}
As explained in Section~\ref{ss:4.1}, the measure $\gamma^\star_{N-1}$ encodes the jump rates of the Siegmund dual of $\mathcal{Z}^{N-1}$ via~\eqref{qij}, which yields the following result.

\begin{proposition}\label{Siegmund1}
For $N\in\N_\infty$, the bdc $\mathcal{Z}^{N-1}$ admits a Siegmund dual $\mathcal{Z}^{N-1,\star}$ on $[N-1]^\Delta$, which is a bdk with transition rates 
\begin{equation*}\begin{split}
	\begin{aligned}
		q^{{N-1},\star}(i,i+1) & = m_i, &\quad & i\in [2:N-2],\\
		q^{{N-1},\star}(i,i-1) & = \ell_{i-1} , &\quad &i\in [2:N-1], \\
		q^{{N-1},\star}(i,\Delta) &= (i-1)\kappa+\mathbbm{1}_{\{i=N-1\}}m_{N-1}, &\quad &i\in [2:N-1].
	\end{aligned}
	\end{split}
\end{equation*}
\end{proposition}

\section{Proof of Theorem~\ref{thm_general}~B}
\label{sec:proof_part_B}
Let us now return to our paired processes, the bdk $X= X^N$ and the bdc $Z=Z^{N-1}$, and get ready for the proof of Part~B of our main result.

\subsection{Siegmund dualities related to $X$ and $Z$}
The aim of this section is to prove the following result.
\begin{proposition}[Siegmund duality]\label{pSD}
	Under conditions \eqref{cond_lambda} and \eqref{cond_mu}, we have
	\begin{enumerate}
		\item The process \(\cZ\) admits a Siegmund dual \(\cZ^\star\coloneqq(\cZ^\star_t)_{t\geq 0}\) on \([N-1]^\D\), that is, for any \(i^\star\in [N-1]^\D,i\in [N-1]\), and \(t\geq 0\),
		\begin{equation*}
			\P(\cZ_t \geq i^\star \mid \cZ_0 = i) = \P(i \geq \cZ^\star_t \mid \cZ^\star_0 = i^\star).
		\end{equation*}
		The process \(\cZ^\star\) has absorbing states \(1\) and \(\D\) and transition rates 
		\begin{equation*}\label{eq:Zstar_rates}
			\begin{split}
				q^{}_{\cZ^\star}(i,i+1) & = (i-1)\mu_{i+1}, \quad i\in [N-2],\\
				q^{}_{\cZ^\star}(i,i-1) & = (i-1)\l_i, \quad i\in [2:N-1],\\
				q^{}_{\cZ^\star}(i,\D) &= (i-1)\kappa+(N-2)\mu_N \mathbbm{1}_{\{i=N-1\}}, \quad i\in [2:N-1].
			\end{split}
		\end{equation*}
		Furthermore, \(\cZ_t\) converges in distribution as \(t \to \infty\) to a random variable \(\cZ_{\mathrm{eq}}\), which is distributed according to the unique stationary distribution of \(\cZ\). In addition,
		\begin{align}\label{eq:a_absorption}
			\P(\cZ^\star \text{ absorbs in } 1 \mid \cZ^\star_0 = i) 
			= \ca_{i-1} 
			= \P(\cZ_{\mathrm{eq}} \ge i), 
			\qquad i \in [N-1],
		\end{align}
		and \(\lim_{i \to \infty} \ca_i = 0\) in the case $N=\infty$.
		\item Let $X^\circ=(X^\circ_t)_{t\geq 0}$ be the process on $[N]_0$ with transition rates 
		\begin{equation}\label{eq:Xcirc_rates}
			\begin{split}
				q^{}_{X^\circ}(i,i+1) & = (i+1)\mu_{i+1}, \quad i\in [0:N-1],\\
				q^{}_{X^\circ}(i,i-1) & = i\l_{i}+\kappa, \quad i\in [1:N],\\
				q^{}_{X^\circ}(i,j) & = \kappa, \quad i\in [2:N], \; j\in [i-2].
			\end{split}
		\end{equation}
		Then, the process \(X^\circ\) admits $X$ as Siegmund dual, that is, for any \(i^\circ\in[N]_0,i\in [N]_0^\Delta\), and \(t\geq 0\),
		\begin{equation*}
			\P(X_t^\circ \geq i\mid X_0^\circ=i^\circ)=\P(i^\circ \geq X_t \mid X_0=i).
		\end{equation*}
		In other words, $X=(X^\circ)^\star$. Furthermore, \(X^\circ_t\) converges in distribution, as \(t \to \infty\), to a random variable \(X^\circ_{\mathrm{eq}}\), which is distributed according to the unique stationary distribution of \(X^\circ\). In addition,
		\begin{equation*}
			\P(X^\circ_{\mathrm{eq}}\geq i) = b_{i}=\P(X \text{ absorbs in } 0 \mid X_0 = i),\quad i\in [N]_{0}, 
		\end{equation*}
		and \(\lim_{i \to \infty} b_i = 0\) in the case $N=\infty$.
	\end{enumerate}
\end{proposition} 
\begin{figure}[t]
	\centering
	\begin{tikzpicture}[->,shorten >=1pt,auto,node distance=3cm,semithick]
		\tikzstyle{every state}=[draw=black,text=black]
		
		\node[state,minimum size=1.2cm] (0X) {1};
		\node[state,minimum size=1.2cm] (1X) [right=1.5cm of 0X] {2};
		\node[state,minimum size=1.2cm] (2X) [right=1.5cm of 1X]{3};
		
		\node 			(dotsX) [right=1cm of 2X] {\(\cdots\)};
		\node[state,minimum size=1.2cm] (4X) [right=1cm of dotsX] {\(N\!-\!2\)};
		\node[state,minimum size=1.2cm] (5X) [right=1.5cm of 4X] {\(N\!-\!1\)};

		\node (d1X) [above=1.5cm of 1X] {\(\D\)};
		\node (d2X) [above=1.5cm of 2X] {\(\D\)};
		\node (d4X) [above=1.5cm of 4X] {\(\D\)};
		\node (d5X) [above=1.5cm of 5X] {\(\D\)};
		
		\node[state,minimum size=1.4cm,opacity=0] (downX) [below = -0.3cm of dotsX] {};
		\node[state,minimum size=1.4cm,opacity=0] (upX) [above = -0.3cm of dotsX] {};
		
		\path
		(1X) edge 		[cmu]				node {\(\l_2\)}					(0X)
		(1X) edge		[left,ck]			node {\(\kappa\)}				(d1X)
		(2X) edge		[left,ck]			node {\(2\kappa\)}				(d2X)
		(4X) edge		[left,ck]			node {\((N-3)\kappa\)}			(d4X)
		(5X) edge		[right,ck]			node {\((N\!-\!2)(\kappa\!+\!\mu_N)\)}				(d5X)
		(2X) edge		[bend left,cmu]		node {\(2\l_3\)}				(1X)
		(1X) edge		[bend left,cl]		node {\(\mu_3\)}				(2X)
		(4X) edge		[bend left,cl]		node {\((N-3)\mu_{N-1}\)}		(5X)
		(5X) edge		[bend left,cmu]		node {\((N-2)\l_{N-1}\)}		(4X)
		
		(downX) edge	[out=180,in=315,cmu]		node{} 		(2X)
		(2X) edge	[in=180,out=45,cl]		node{} 				(upX)
		(4X) edge	[in=0,out=225,cmu]		node{} 				(downX)
		(upX) edge	[in=135,out=0,cl]		node{} 				(4X)
		;

	
		\node[state,minimum size=1.2cm] (1) [below=3cm of 0X] {0};
		\node[state,minimum size=1.2cm] (2) [below=3cm of 1X] {1};
		\node[state,minimum size=1.2cm] (3) [below=3cm of 2X] {\(2\)};
		\node		(dots) [right=1cm of 3] {\(\cdots\)};
		\node[state,minimum size=1.2cm] (4) [below=3cm of 4X] {\(N\!-\!1\)};
		\node[state,minimum size=1.2cm] (5) [below=3cm of 5X] {\(N\)};
		
		\node[state,white,minimum size=1.4cm,opacity=0] (down) [below = -0.3cm of dots] {};
		\node[state,white,minimum size=1.4cm,opacity=0] (up) [above = 1.4cm of dots] {};
		\node[state,white,minimum size=1.4cm,opacity=0] (center) [right = 0.7cm of 3] {};
		
		\path 
		(1)		edge	[cl]					node {\(\mu_1\)}				(2)
		(2)		edge	[bend left,cmu]			node {\(\l_1\)}					(1)
		(2)		edge	[cl]					node {\(2\mu_2\)}				(3)
		(3)		edge	[bend left,cmu]			node {\(2\l_2\)}				(2)
		(4)		edge	[cl]					node {\(N\mu_N\)}				(5)
		(5)		edge	[bend left,cmu]			node {\(N\l_{N}\)}				(4)

		(2)		edge	[bend right=70,ck]		node {\(\kappa\)}				(1)
		(3)		edge	[bend right=70,ck]		node {\(\kappa\)}				(1)
		(3)		edge	[bend right=70,ck]		node {\(\kappa\)}				(2)
		(5) 	edge	[bend right=70,ck]		node {\(\kappa\)}				(4)
		
		(down)	edge	[cmu,in=315,out=180]	node {}							(3)
		(3)		edge	[cl]					node {}							(center)
		(up)	edge	[ck,out=180,in=90]		node {}							(3)
		(up)	edge	[ck,out=180,in=90]		node {}							(2)
		(up)	edge	[ck,out=180,in=70]		node {}							(1)
		
		(center)	edge	[cl,out=0,in=180]		node {}						(4)		
		(4)		edge	[ck,out=90,in=0]		node {}							(up)
		(5)		edge	[ck,out=90,in=0]		node {\(\kappa\)}				(up)
		(4)		edge	[cmu,out=225,in=0]		node {}							(down)
		;
	\end{tikzpicture}
	\caption{The transition graph of \(\cZ^\star\) (top) and \(X^\circ\) (bottom) for finite \(N\) (see Proposition~\ref{pSD}).\label{Fig:duals}}
\end{figure}
\begin{proof}
	The duality stated in part (1) is an immediate consequence of Proposition~\ref{Siegmund1} when we set \( \mathcal Z^{N-1} = Z \), corresponding to the choices \( \ell_i = i \lambda_{i+1} \) and \( m_i = (i-1)\mu_{i+1} \). For the duality in part (2), we apply Proposition~\ref{Siegmund1} to \( \mathcal{Z}^{N+1} = X^\circ + 1 \), which corresponds to the choices \( \ell_i = i\mu_i \) and \( m_i = (i-1)\lambda_{i-1} \), and recall that, for $N\in\N$, $m_{N+1}=N\lambda_N=0$. This yields \( (X^\circ + 1)^\star = X + 1 \), which, in turn, implies the desired duality.

	We now turn to the additional results concerning the processes \(\cZ\) and \(\cZ^\star\). First, since Condition~\ref{cond_mu} is satisfied, Lemma~\ref{lemma:conditions_mu_lambda} implies that \(\cZ\) absorbs in \(\{1,\Delta\}\) almost surely from any initial state. Thus, letting \(t \to \infty\) in the Siegmund duality relation yields, for any \(i^\star \in [N-1]^\Delta\) and \(i \in [N-1]\),
	\begin{equation}\label{sta-lim}
		\lim_{t\to\infty}\P(\cZ_t \geq i^\star \mid \cZ_0 = i) 
		= \P(\cZ^\star \text{ absorbs at } 1 \mid \cZ^\star_0 = i^\star)
		\eqqcolon \widehat{b}_{i^\star}.
	\end{equation}
	The map \(i^\star \mapsto \widehat{b}_{i^\star}\) is non-increasing, satisfies \(\widehat{b}_1 = 1\), and, under Condition~\ref{cond_lambda}, Lemma~\ref{lemma:conditions_mu_lambda} implies that \(\lim_{m\to\infty}\widehat{b}_m = 0\). Combining these properties of \((\widehat{b}_{i^\star})_{i^\star \in [N-1]^\Delta}\) with \eqref{sta-lim} shows that \(\cZ_t \to \cZ_{\mathrm{eq}}\) in distribution as \(t\to\infty\), where \(\cZ_{\mathrm{eq}}\) is a random variable whose distribution is independent of \(\cZ_0\) and characterised by
	\begin{equation*}
		\P(\cZ_{\mathrm{eq}} \geq i^\star) = \widehat{b}_{i^\star}.
	\end{equation*}
	As a limiting law, the law of $\cZ_{\mathrm{eq}}$ is stationary for the chain. Moreover, since $Z$ is irreducible, the existence of a stationary distribution implies non-explosion, which in turn yields uniqueness of the stationary distribution (see \cite[Thm.'s 3.5.2 and 3.5.3]{Norris}).

	The corresponding results for the processes \(X^\circ\) and \(X\) can be established in a completely analogous manner.
\end{proof}

As a consequence of Part~(1) of Proposition \ref{pSD}, we get the recursion \eqref{eq:rec_a_gen}:
\begin{corollary}\label{cor:rec_a}
	The vector \(\ca = (\ca_i)_{i\in[N]_0}\) of tail probabilities of \(X\) satisfies the recursion
	\begin{equation*}
		(\mu_{i+1}+\l_{i}+\kappa)\ca_i=\l_{i}\ca_{i-1}+\mu_{i+1}\ca_{i+1},\quad i \in [N-1],
	\end{equation*}
	complemented by the boundary conditions \(\ca_0=1\) and \(\ca_{N}=0\) when \(N<\infty\), and \/ \(\lim_{i\to\infty}\ca_i=0\) otherwise.
\end{corollary}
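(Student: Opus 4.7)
The plan is to read the recursion off directly from a first-step (infinitesimal) analysis performed on the Siegmund dual $Z^\star$, using the representation already supplied by Proposition~\ref{pSD}(2):
\[
a_{i-1} \;=\; \P(Z^\star \text{ absorbs in } 1 \mid Z^\star_0 = i), \qquad i\in[N],
\]
together with the trivial fact that the absorption probability at $1$ is $0$ when $Z^\star$ starts in the cemetery $\D$. Since $\{1,\D\}$ exhausts the absorbing set of $Z^\star$, the standard harmonicity relation for absorption probabilities at any transient state $i\in[2:N-1]$ reads
\[
q^{}_{Z^\star}(i)\, a_{i-1} \;=\; q^{}_{Z^\star}(i,i+1)\,a_{i} \;+\; q^{}_{Z^\star}(i,i-1)\,a_{i-2} \;+\; q^{}_{Z^\star}(i,\D)\cdot 0,
\]
where $q^{}_{Z^\star}(i)$ denotes the total rate out of $i$. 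Substituting the rates from~\eqref{eq:Zstar_rates}, one sees that every surviving term carries a common factor $(i-1)$; after dividing this out and relabelling $k=i-1$, one obtains the recursion stated in the corollary on the index range $k\in[1:N-2]$.

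The two boundary cases are handled by the same device. For finite $N$, the first-step analysis applied at $i=N$ yields, after the same cancellation of $(N-1)$, the identity $(\l_{N-1}+\kappa+\mu_N)\,a_{N-1}=\l_{N-1}\,a_{N-2}$, which coincides with the corollary's recursion evaluated at $k=N-1$ under the convention $a_N=0$. The convention itself is immediate from $a_N=\P(Z_\infty > N)=0$ since $Z_\infty\in[N]$, while $a_0=1$ follows from $Z_\infty\ge 1$ almost surely. In the case $N=\infty$ the recursion is obtained uniformly for all $k\in\N$ by the interior argument, and the required decay $\lim_{i\to\infty}a_i=0$ is precisely the positive-recurrence statement recorded in Remark~\ref{remark:pos_rec}, which relies on condition~\eqref{eq:condition_lambda} through the Siegmund duality between $Z$ and $Z^\star$.

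The proof is thus a direct consequence of Proposition~\ref{pSD}(2); no substantive obstacle arises. The only point requiring mild attention is the top-boundary contribution at $i=N$ in the finite case, where the extra mass $(N-1)\mu_N$ is absorbed into the killing rate of $Z^\star$ rather than appearing as an upward birth rate, but this is exactly what is needed to make the general recursion valid at $k=N-1$ once the convention $a_N=0$ is imposed.
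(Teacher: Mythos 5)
Your proposal is correct and follows essentially the same route as the paper: a first-step decomposition of the absorption probabilities $a_{i-1}=\P(Z^\star \text{ absorbs in }1\mid Z^\star_0=i)$ from Proposition~\ref{pSD}(2), with $a_0=1$ and $a_N=0$ read off directly and the decay $\lim_{i\to\infty}a_i=0$ taken from Remark~\ref{remark:pos_rec}. The only point worth flagging is that the cancellation of $(i-1)$ and the match with the stated recursion require the downward rate $q^{}_{Z^\star}(i,i-1)=(i-1)\l_{i-1}$ as in Figure~\ref{Fig:Z_star} (the index in the displayed rate list is off by one), which is indeed what you use in your boundary computation at $i=N$.
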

\begin{proof}
	The recursion follows immediately from a first-step decomposition of the absorption probabilities \eqref{eq:a_absorption} of \(\cZ^\star\). The boundary condition \(\ca_0=1\) is clear, as is \(\ca_N=0\) for finite \(N\). The condition \mbox{\(\lim_{i\to\infty}\ca_i=0\)} for \(N=\infty\) is also contained in Proposition~\ref{pSD}.
\end{proof}
We would like to note that Corollary~\ref{cor:rec_a} may also be obtained as a consequence of the stationarity conditions of \(\cZ\), but this is not obvious at first sight; see \cite[Proof of Prop.~6]{LKBW15} for the special case \eqref{special}.

As to Part~(1) of Proposition \ref{pSD}, we would like to emphasise that Siegmund duality is not an involution, so $X^\circ$ (with rates defined in \eqref{eq:Xcirc_rates}) is \emph{not} the Siegmund dual of $X$, but may (and will) be named the inverse Siegmund dual of $X$. This will become crucial when we now prove part~B of our main result.

\subsection{Completing the proof}\label{proofpartB}
Let us recapitulate where we stand at this point and what remains to be done. We have established all connections announced in Figure~\ref{Fig:intro} except for the lower horizontal one. That is, we can express ratios of the $b_i$ in terms of $a$ (and the birth-and-death rates) thanks to Part~A of Theorem~\ref{thm_general}, since the latter establishes the absorption probabilities of a bdk $X$ in terms of the tail probabilities of its paired bdc $Z$. Next, Proposition~\ref{pSD} has established $X$ as the Siegmund dual of the (wider-sense) bdc $X^\circ$ with tail probabilities $(b_i)$; as well as the bdk $Z^*$, with absorption probabilities $(a_i)$, as the Siegmund dual of $Z$. What is left to be done is to establish $a$ in terms of $b$. But although $a$ is now the vector of absorption probabilities of a bdk and $b$ the vector of tail probabilities of a bdc, we cannot invoke Part~A of Theorem~\ref{thm_general} directly, because $X^\circ$ is not an instance of $Z$ of \eqref{calZ_rates}; and because $Z^\star$ is not paired with $X^\circ$ in the sense of \eqref{X_rates} and \eqref{calZ_rates}. The first deficiency is easily mended: We work instead with $\overline{Z} \coloneqq X^\circ+1$; Figure~\ref{Fig:X_bar} (middle) shows its transition graph. We recognise in \(\overline Z\) an instance of our original process \(\cZ\), with state space \([N-1]\) replaced by \([N+1]\) and parameters \(\l_i,\mu_i\) replaced by \(\overline\l^{}_i\coloneqq \mu^{}_{i-1}\) for \(i\in[2:N+1]\) and \(\overline\mu^{}_i\coloneqq\l^{}_{i-2}\) for \(i\in[3:N+2]\), complemented by arbitrary \(\overline \l_1,\overline\mu_1,\overline\mu_2>0\) (and the rates still satisfying condition \eqref{calZ_rates}). As a consequence of Proposition~\ref{pSD}, \(\overline \cZ\) has a unique stationary distribution; we denote by \(\overline \ca \coloneqq (\overline \ca_i)_{i\in[N+1]_0}\) the corresponding tail probabilities. Let now \(\overline X\coloneqq (\overline X_t)_{t\geq 0}\) be the process paired with \(\overline \cZ\) in the sense of \eqref{X_rates} and \eqref{calZ_rates}; \(\overline X\) is defined on \([N+2]_0^\D\) and Figure~\ref{Fig:X_bar} (bottom) shows its transition graph. 

\begin{figure}[b]
	\centering
	\begin{tikzpicture}[->,shorten >=1pt,auto,node distance=3cm,semithick]
		\tikzstyle{every state}=[draw=black,text=black]
		
		\node[state,minimum size=1.2cm] (0)	{\(0\)};
		\node[state,minimum size=1.2cm] (1) [right=1.5cm of 0]{\(1\)};
		\node[state,minimum size=1.2cm] (2) [right=1.5cm of 1] {\(2\)};
		\node		(dots) [right=1cm of 2] {\(\cdots\)};
		\node[state,minimum size=1.2cm] (3) [right=1cm of dots] {\(N\)};
		\node[state,minimum size=1.2cm] (4) [right=1.5cm of 3] {\(N\!+\!1\)};
		\node[state,minimum size=1.2cm] (5) [right=1.5cm of 4] {\(N\!+\!2\)};
		
		\node (d1) [above=1.5cm of 1] {\(\D\)};
		\node (d2) [above=1.5cm of 2] {\(\D\)};
		\node (d3) [above=1.5cm of 3] {\(\D\)};
		\node (d4) [above=1.5cm of 4] {\(\D\)};
		\node (d5) [above=1.5cm of 5] {\(\D\)};
		
		\node[state,white,minimum size=1.4cm] (down) [below = -0.3cm of dots] {};
		\node[state,white,minimum size=1.4cm] (up) [above = -0.3cm of dots] {};
		
		\path
		(1) edge	[left,ck]			node {\small\(\kappa\)}						(d1)
		(2) edge	[left,ck]			node {\small\(2\kappa\)}					(d2)
		(3) edge	[left,ck]			node {\small\(N\kappa\)}					(d3)
		(4) edge	[left,ck]			node {\small\((N+1)\kappa\)}				(d4)
		(5) edge	[left,ck]			node {\small\((N+2)\kappa\)}				(d5)
		
		(1) edge	[cmu]				node {\small\(\overline\mu_1\)}				(0)
		(2) edge	[bend left,cmu]		node {\small\(2\overline \mu_2\)}			(1)
		(4) edge	[bend left,cmu]		node {\small\((N+1)\overline \mu_{N+1}\)}	(3)
		(5) edge	[bend left,cmu]		node {\small\((N+2)\overline \mu_{N+2}\)}	(4)
		
		(1) edge	[bend left,cl]		node {\small\(\overline \l_1\)}				(2)
		(3) edge	[bend left,cl]		node {\small\(N\overline \l_{N}\)}			(4)
		(4) edge	[bend left,cl]		node {\small\((N+1)\overline\l_{N+1}\)}		(5)
		
		(down) edge	[out=180,in=315,cmu]	node{} 				(2)
		(2) edge	[in=180,out=45,cl]		node{} 				(up)
		(3) edge	[in=0,out=225,cmu]		node{} 				(down)
		(up) edge	[in=135,out=0,cl]		node{} 				(3)
		;
		
		\node[state,minimum size=1.2cm] (b1) [above=2.5cm of 1]{\(3\)};
		\node[state,minimum size=1.2cm] (b2) [right=1.5cm of b1] {\(4\)};
		\node		(bdots) [right=1cm of b2] {\(\cdots\)};
		\node[state,minimum size=1.2cm] (b3) [right=1cm of bdots] {\(N\)};
		\node[state,minimum size=1.2cm] (b4) [right=1.5cm of b3] {\(N\!+\!1\)};
		
		\node (bd2) [above=1.5cm of b2] {\(\D\)};
		\node (bd3) [above=1.5cm of b3] {\(\D\)};
		\node (bd4) [above=1.5cm of b4] {\(\D\)};
		
		\node[state,white,minimum size=1.4cm] (bdown) [below = -0.3cm of bdots] {};
		\node[state,white,minimum size=1.4cm] (bup) [above = -0.3cm of bdots] {};
		
		\path
		(b2)	edge	[left,ck]			node {\small\(\kappa\)}								(bd2)
		(b3)	edge	[left,ck]			node {\small\((N-3)\kappa\)}						(bd3)
		(b4)	edge	[right,ck]			node {\small\((N-2)(\kappa+\overline\l_{N+1})\)}	(bd4)
		(b2)	edge	[cmu]				node {\small\(\overline \mu_4\)}					(b1)
		(b4)	edge	[bend left,cmu]		node {\small\((N-2)\overline \mu^{}_{N+1}\)}		(b3)
		(b3)	edge	[bend left,cl]		node {\small\((N-3)\overline \l^{}_{N}\)}			(b4)
		(bdown)	edge	[out=180,in=315,cmu]	node{}											(b2)
		(b2)	edge	[in=180,out=45,cl]		node{}											(bup)
		(b3)	edge	[in=0,out=225,cmu]		node{}											(bdown)
		(bup)	edge	[in=135,out=0,cl]		node{}											(b3)
		;

		\node[state,minimum size=1.2cm] (c1) [below=3cm of 1] {\(1\)};
		\node[state,minimum size=1.2cm] (c2) [below=3cm of 2] {\(2\)};
		\node		(cdots) [right=1cm of c2] {\(\cdots\)};
		\node[state,minimum size=1.2cm] (c3) [below=3cm of 3] {\(N\)};
		\node[state,minimum size=1.2cm] (c4) [below=3cm of 4] {\(N\!+\!1\)};
		
	 	\node[state,white,minimum size=1.4cm,opacity=0] (c_down) [below = -0.3cm of cdots] {};
		\node[state,white,minimum size=1.4cm,opacity=0] (c_up) [above = 1.4cm of cdots] {};
		\node[state,white,minimum size=1.4cm,opacity=0] (c_center) [right = 0.7cm of c2] {};
		
		\path 
		(c1)		edge	[cl]					node {\(\overline \l_2\)}			(c2)
		(c2)		edge	[bend left,cmu]			node {\(\overline \mu_3\)}			(c1)
		(c3)		edge	[cl]					node {\(N\overline \l_{N+1}\)}		(c4)
		(c4)		edge	[bend left,cmu]			node {\(N\overline \mu_{N+2}\)}		(c3)
		(c2)		edge	[bend right=70,ck]		node {\(\kappa\)}					(c1)
		(c4)		edge	[bend right=70,ck]		node {\(\kappa\)}					(c3)
		(c_down)	edge	[cmu,in=315,out=180]	node {}								(c2)
		(c2)		edge	[cl]					node {}								(c_center)
		(c_up)		edge	[ck,out=180,in=90]		node {}								(c2)
		(c_up)		edge	[ck,out=180,in=70]		node {}								(c1)
		(c_center)	edge	[cl,out=0,in=180]		node {}								(c3)
		(c3)		edge	[ck,out=90,in=0]		node {}								(c_up)
		(c4)		edge	[ck,out=90,in=0]		node {}								(c_up)
		(c3)		edge	[cmu,out=225,in=0]		node {}								(c_down)
		;
		
	\end{tikzpicture}
	\caption{From top to bottom: the transition graphs of \(\vec\cZ ^{\star}_{}\) (see proof of Lemma~\ref{lemma:barX}), \(\overline X\), and \(\overline \cZ\) (see beginning of Sec.~\ref{proofpartB}) for finite \(N\).\label{Fig:X_bar}}
\end{figure}

As for \(X\), states \(0\) and \(\D\) are absorbing for \(\overline X\); let \(\overline b\coloneqq (\overline b_i)_{i\in[N+2]_0}\) be the vector of absorption probabilities in \(0\) when starting from \(i\in[N+2]_0\). Thanks to Lemma~\ref{lemma:conditions_mu_lambda}, we know that assumption \eqref{cond_mu} implies that \(\P(\overline X \text{ absorbs in } \left\{0,\D\right\})=1\), while \eqref{cond_lambda} implies that \(\lim_{i\to \infty} \overline b_i =0\). Part~A of Theorem~\ref{thm_general} then gives us \(\overline b\) as a function of \(\overline \ca\), namely
\begin{equation}\label{eq:barb_bara}
	\frac{\overline b_i}{\overline b_2} = \frac{\overline \ca_{i-2}-\overline \ca_{i-1}}{\overline \ca_0-\overline \ca_1}\prod_{j=1}^{i-2}\frac{\overline \mu_{j+2}}{\overline \lambda_{j+1}}, \quad i\in [2:N+2]. 
\end{equation} 

By definition of \(\overline Z\), we have \(\overline{a}_i = \P(\overline Z >i) = \P(X^\circ \geq i)\); and thanks to Proposition~\ref{pSD}, we know that $\P(X^\circ \geq i)=b_i$, so
\begin{equation}\label{eq:bar_a}
	\overline \ca_i = b_i,\quad i\in[N]_0.
\end{equation}
In the light of \eqref{eq:barb_bara} and \eqref{eq:bar_a}, our task of determining $a$ as a function of $b$ therefore boils down to finding $a$ as a function of $\overline b$. This, however, is not immediate because of the second deficit: $Z^\star$ is not paired with $X^\circ$ or with $X^\circ+1=\overline Z$, or, put differently, \(\overline X\), the process paired with $\overline Z$, does \emph{not} equal \(\cZ^\star\). So Proposition~\ref{pSD} cannot be applied immediately. However, we will see that the process $\vec{Z}^\star\coloneqq Z^\star+2$, on its state space (which is a subset of the state space of $\overline{X}$), has the same hitting probabilities as $\overline X$. The following lemma provides us with this missing piece of the puzzle.
\begin{lemma}\label{lemma:barX}
	The vectors \(\ca\) and \(\overline b\) of absorption probabilties of $Z^\star$ and absorption probabilities of \(\overline X\), respectively, are connected via
	\begin{equation*}\label{eq:a_bbar}
		\ca^{}_{i} =\frac{\overline b^{}_{i+3}}{\overline b^{}_3},\quad i\in[N-2]_0.
	\end{equation*}
\end{lemma}
\begin{proof} 
	Consider the continuous-time Markov chain \(\vec {\cZ} ^\star \coloneqq \cZ^*+2\) on $[3:N+1]^\Delta$; 
	see Figure~\ref{Fig:X_bar} (top) for its transition graph. Clearly Proposition~\ref{pSD} grants us that
	\begin{equation*}
		\ca_i = \P(\vec \cZ^\star \textit{ absorbs in } 3 \mid \vec\cZ^\star_0 = i+3)
	\end{equation*}
	for \(i \in [N-2]_0\). So what we have to do is to express the absorption probabilities of $\vec{Z}^*$ in 3 in terms of the absorption probabilities of \(\overline X\) in \(0\). Let us, therefore, consider the embedded discrete-time Markov chains of \(\vec \cZ^\star_{}\) and \(\overline X\). For $i \in [4:N]$, the probabilities for the transitions $i \to i-1, i \to i+1$, and $i \to \D$ are the same for both embedded chains. The same holds for the transition to $N$ when starting in $N+1$. Again when starting in $N+1$, the probability of $\vec \cZ^\star$ to make its first step to $\Delta$ agrees with the probability of $\overline X$ to make its first step to $\{\Delta, N+2\}$, which, in turn, implies absorption in $\Delta$ due to $\overline{\mu}_{N+2}=0$. Altogether, therefore, we have for \(i\in[3:N+1]\) that
	\begin{equation}\label{eq:hitting_times}
		a_{i-3}= \P(T_3^{\vec \cZ^\star}<T_\D^{\vec \cZ^\star}\mid \vec \cZ^\star_0 = i) = \P(T_3^{\overline X}<T_\D^{\overline X}\mid \overline X_0 = i),
	\end{equation}
	where $T_k^{\vec \cZ^\star}$ and $T_k^{\overline X}$ denote the hitting times of $\vec \cZ^\star$ and $\overline X$ in $k$, respectively. Using the Markov property, we obtain
	\begin{equation*}
		\overline b_{i}=\P \big (T_0^{\overline X}<T_\D^{\overline X}\mid \overline X_0=i \big )=\P \big (T_{3}^{\overline X}<T_\D^{\overline X}\mid \overline X_0=i \big ) \ts \P \big (T_0^{\overline X}<T_\D^{\overline X}\mid \overline X_0=3 \big ).
	\end{equation*}
	Hence,
	\begin{equation*}
		\P\big(T_{3}^{\overline X}<T_\D^{\overline X}\mid \overline X_0=i\big)=\frac{\overline b_{i}}{\overline b_{3}}.
	\end{equation*} 
	Combining this with \eqref{eq:hitting_times} gives the conclusion.
\end{proof}

To complete the proof of Theorem~\ref{thm_general}~B, we now divide~\eqref{eq:barb_bara} by the same equation for \(i=3\) and obtain
\begin{equation*}
	\frac{\overline b^{}_i}{\overline b^{}_3} = \frac{\overline \ca^{}_{i-2}- \overline \ca^{}_{i-1}}{\overline \ca^{}_{1}-\overline \ca^{}_{2}}\prod_{j=2}^{i-2}\frac{\overline \mu^{}_{j+2}}{\overline \l^{}_{j+1}}, \quad i\in[2:N+2].
\end{equation*}
Inserting this into Lemma~\ref{lemma:barX} and combining with~\eqref{eq:bar_a} and the definition of the \(\overline\l^{}_i\) and~\(\overline\mu^{}_i\) leads to the conclusion. \hfill \qed


\section{An application}\label{sec:app}
As announced in the introduction, we now briefly review the Moran model for a finite population with two types under selection and mutation, and its diffusion limit. We then use our results to link the absorption probabilities of the (line counting process of) the killed ancestral selection graph (kASG) to the stationary distribution of the (line counting process of the) pruned lookdown ancestral selection graph (pLD-ASG). We will first present the result in the diffusion limit, where the application of Theorem~\ref{thm_general} is straightforward, and then move on to the finite case, which requires further effort.

\subsection{The two-type Moran model with selection and mutation and its diffusion limit}\label{sec:Moran}
Consider a population of constant finite size \(N>0\) consisting of two types of individuals, type \(0\) and type \(1\), which evolves as follows (see \cite{JSP} for a review). An individual of type \(i\in\{0,1\}\) may, at any time, do either of two things: it may reproduce, at rate \(1\) for type-\(1\) individuals and at rate \(1+s^N\), \(s^N>0\), for type-\(0\) individuals; or it may mutate, at rate \(u^N > 0\). (From now on, we make the dependence on \(N\) explicit.) When an individual reproduces, its single offspring inherits the parent's type and replaces a uniformly chosen individual, possibly its own parent. When an individual mutates, the new type is \(j\in\{0,1\}\) with probability \(\nu_j\in(0,1)\); \(\nu_0 + \nu_1 = 1\). The evolution of the type composition in the population is fully described by the process \(Y^N\coloneqq (Y_t^{N})_{t\geq 0}\), where \(Y_t^N\) denotes the number of type-\(1\) individuals in the population at time \(t\). The process \(Y^N\) is a continuous-time birth-death process on \([N]_0\) with generator matrix \((q^{}_{Y^N}(i,j))_{i,j \in [N]_0}\) defined by the nontrivial transition rates 
\begin{align*}
	q^{}_{Y^N}(i,i+1) &=i\myfrac{N-i}{N} + u^N\nu_1(N-i),\\
	q^{}_{Y^N}(i,i-1) &=(1+s^N)i\myfrac{N-i}{N} + u^N\nu_0 i.
\end{align*}
We denote by \(\pi^N\deq (\pi^N_i)_{i\in[N]_0}\) the (reversible) stationary distribution of \(Y^N\), which is given by
\begin{equation*}\label{stationarydistributionfinite}
	\pi^{N}_i \deq C^N \prod_{j=1}^{i-1} \frac{q^{}_{Y^{N}}(j,j+1)}{q^{}_{Y^{N}}(j+1,j)},
\end{equation*}
where the empty product is \(1\) and \(C^N\) is a normalising constant chosen so that \(\sum_{i=0}^{N}\pi_i^N = 1\). We denote by \(Y^N_{\text{eq}}\) a random variable with distribution \(\pi^N\).
\smallskip

In the diffusion limit, the Moran model turns into the Wright--Fisher diffusion $\cY$ mentioned in the introduction. Based on the notion of the ancestral selection graph introduced by Krone and Neuhauser \cite{KN97}, the kASG and the pLD-ASG were originally formulated in this limit, but have also been established for the (finite-)$N$ Moran model, see \cite{BCH18,JSP}. Here, too, the kASG is a genealogical tool to investigate the type distribution of a population at equilibrium, whereas the pLD-ASG does the same for the type distribution of the \emph{ancestors} of the current population.

\smallskip
The kASG for a finite population of \(N\) individuals\footnote{Strictly speaking, what we define here is the \emph{line-counting process} of the kASG, but since we do not need the full graphical structure in this work, we simply speak of the kASG.} is a special case of the process \(X=X^N\) introduced in Section~\ref{Sec:results} with
\begin{equation}\label{def_kASGfinite}
	\l_i^{R,N} \deq s^N \myfrac{N-i}{N}, \quad \mu_i^{R,N} \deq \myfrac{i-1}{N} + u^{N} \nu_1, \quad \text{and } \, \kappa^{R,N} \deq u^{N} \nu_0
\end{equation}
for \(i\in[N]\). We denote this process by \(R^N \deq (R^N_t)_{t \geq 0}\); its transition graph is shown in Figure~\ref{Fig:R_finite}.
\begin{figure}[t]
	\centering
	\begin{tikzpicture}[->,shorten >=1pt,auto,node distance=3cm,semithick]
		\tikzstyle{every state}=[draw=black,text=black]
		
		\node[state,minimum size=1.2cm] (0) {0};
		\node[state,minimum size=1.2cm] (1) [right of=0] {1};
		\node[state,minimum size=1.2cm] (2) [right of=1]{2};
		\node		(m_dots) [right=0.6cm of 2] {\(\cdots\)};
		\node[state,minimum size=1.2cm] (3) [right=0.6cm of m_dots] {\(i\)};
		\node[state,minimum size=1.2cm] (4) [right of=3] {\(i+1\)};
		\node		(r_dots) [right=0.6cm of 4] {\(\cdots\)};
		\node[state,minimum size=1.2cm] (5) [right=0.6cm of r_dots] {\(N\)};
		
		\node (d1) [above=1.5cm of 1] {\(\D\)};
		\node (d2) [above=1.5cm of 2] {\(\D\)};
		\node (d3) [above=1.5cm of 3] {\(\D\)};
		\node (d4) [above=1.5cm of 4] {\(\D\)};
		\node (d5) [above=1.5cm of 5] {\(\D\)};
		
		\node[state,white,minimum size=1.2cm] (m_down) [below = -0.3cm of m_dots] {};
		\node[state,white,minimum size=1.2cm] (m_up) [above = -0.3cm of m_dots] {};
		\node[state,white,minimum size=1.2cm] (r_down) [below = -0.3cm of r_dots] {};
		\node[state,white,minimum size=1.2cm] (r_up) [above = -0.3cm of r_dots] {};
		
		\path
		(1) edge	[cmu]					node {\(u^N\nu_1\)}						(0)
		(1) edge	[ck, right]				node {\(u^N\nu_0\)}						(d1)
		(3) edge	[ck, right]				node {\(iu^N\nu_0\)}					(d3)
		(4) edge	[ck, right]				node {\((i+1)u^N\nu_0\)}				(d4)
		(2) edge	[ck, right]				node {\(2u^N\nu_0\)}					(d2)
		(5) edge	[ck, right]				node {\(Nu^N\nu_0\)}					(d5)
		(2) edge	[cmu, bend left]		node {\(2(\frac{1}{N}+u^N\nu_1)\)}		(1)
		(1) edge	[cl, bend left,above]	node {\(\frac{N-1}{N}s^N\)}				(2)
		(3) edge	[cl, bend left,above]	node {\(i\frac{N-i}{N}s^N\)}			(4)
		(4) edge	[cmu, bend left]		node {\((i+1)(\frac{i}{N}+u^N\nu_1)\)}	(3)

		(m_down) edge	[out=180,in=315,cmu]			node{} 				(2)
		(2) edge		[in=180,out=45,cl]				node{} 				(m_up)
		(3) edge		[in=0,out=225,cmu]				node{} 				(m_down)
		(m_up) edge		[in=135,out=0,cl]				node{} 				(3)
		(r_down) edge	[out=180,in=315,cmu]			node{} 				(4)
		(4) edge		[in=180,out=45,cl]				node{} 				(r_up)
		(5) edge		[in=0,out=225,cmu]				node{} 				(r_down)
		(r_up) edge		[in=135,out=0,cl]				node{} 				(5)
		;
		
	\end{tikzpicture}

	\begin{tikzpicture}[->,shorten >=1pt,auto,node distance=3cm,semithick,state/.style={circle, draw, minimum size=1cm}]
		\tikzstyle{every state}=[draw=black,text=black]
		
		\node[state,minimum size=1.2cm] (1) {1};
		\node		(m_dots) [right=1cm of 1]{\(\cdots\)};
		\node[state,minimum size=1.2cm] (2) [right=1cm of m_dots] {\(i-1\)};
		\node[state,minimum size=1.2cm] (3) [right=2.5cm of 2] {\(i\)};
		\node[state,minimum size=1.2cm] (4) [right=2.5cm of 3] {\(i+1\)};
		\node		(r_dots) [right=1cm of 4] {\(\cdots\)};
		\node[state,minimum size=1.1cm] (5) [right=1cm of r_dots] {\(N\)};
		
		\node[state,white,minimum size=1.4cm] (m_down) [below = -0.3cm of m_dots] {};
		\node[state,white,minimum size=1.4cm] (m_up) [above = 1.4cm of m_dots] {};
		\node[state,white,minimum size=1.4cm] (r_down) [below = -0.3cm of r_dots] {};
		\node[state,white,minimum size=1.4cm] (r_up) [above = 1.4cm of r_dots] {};
		
		\path 
		(2) edge	[above,cl]			node {\((i-1)\frac{N-(i-1)}{N}s^N\)}		(3)
		(3) edge	[bend left,cmu]		node {\((i-1)(\frac{i}{N}+ u^N\nu_1)\)}		(2)
		(3) edge	[above,cl]			node {\(i\frac{N-i}{N}s^N\)}				(4)
		(4) edge	[bend left,cmu]		node {\(i(\frac{i+1}{N}+ u^N\nu_1)\)}		(3)

		(3) edge	[bend right=70,ck]	node {\(u^N\nu_0\)}			(2)
		(4) edge	[bend right=70,ck]	node {\(u^N\nu_0\)}			(2)
		(4) edge	[bend right=70,ck]	node {\(u^N\nu_0\)}			(3)
		
		(4) edge	[ck,out=90,in=0]	node {}					(2.6,2.3)
		(3) edge	[ck,out=90,in=0]	node {}					(2.6,2.3)
		(2) edge	[ck,out=90,in=0]	node {}					(2.6,2.3)
		
		(r_up) edge		[ck,out=180,in=90]		node {}				(4)
		(r_up) edge		[ck,out=180,in=90]		node {}				(3)
		(r_up) edge		[ck,out=180,in=90]		node {}				(2)
		(m_up) edge		[ck,out=180,in=90]		node {}				(1)
		(5) edge		[ck,out=90,in=0]		node {}				(r_up)
		
		(1) edge		[cl,out=0,in=180]		node {}				(1.3,0)
		(4) edge		[cl,out=0,in=180]		node {}				(12.6,0)
		(2.7,0) edge	[cl,out=0,in=180]		node {}				(2)
		(14,0) edge		[cl,out=0,in=180]		node {}				(5)
		
		(m_down) edge	[cmu,out=180,in=315]	node {}				(1)
		(r_down) edge	[cmu,out=180,in=315]	node {}				(4)
		(2) edge		[cmu,out=225,in=0]		node {}				(m_down)
		(5) edge		[cmu,out=225,in=0]		node {}				(r_down)
		
		;
	\end{tikzpicture}
	\caption{The transition graphs of \(R^N\) (top; see \eqref{def_kASGfinite}) and $L^N$ (bottom; see \eqref{def:pLD-ASG_finite}). \label{Fig:R_finite}}
\end{figure}
In line with the notation introduced in Section~\ref{Sec:results}, we denote by \(b^N\coloneqq (b^N_i)_{i\in [N]_0}\) the absorption probabilities in \(0\) of \(R^N\) when starting from state \(i\). The \((b^N_i)_{i\in [N]_0}\) are linked to \(Y^N\) through the following relationship, which is a special case of \cite[Corollary 2.4]{luigi}:
\begin{equation}\label{cor_bY_finite_eq}
	b^N_i = \P(R^N \text{ absorbs in } 0 \mid R^N_0=i) = \E \Big [ \frac{(Y^{N}_{\text{eq}})^ {\underline i}}{N^{\underline i}}\Big ], \quad i\in [N]_0,
\end{equation}
where \(\ell^{\underline{i}}\deq \frac{\ell!}{(\ell-i)!}\) is the falling factorial for an integer \(\ell\). In words, \(b^N_i\) is the probability to obtain type-1 individuals only when sampling \(i\) times without replacement from the stationary population. In this sense, \eqref{eq:rec_b_gen} now has the meaning of a \emph{sampling recursion}; explicitly, it reads
\begin{equation} \label{rec_b_finite}
	\Big (\frac{i-1}{N}+ s^N\frac{N-i}{N} + u^N \Big )b^N_i = \Big (\frac{i-1}{N}+u^N\nu_1\Big )b^N_{i-1} + s^N \frac{N-i}{N}b^N_{i+1}, \quad i \in [N],
\end{equation}
together with the boundary condition \(b^N_0=1\) and the convention \(b^N_{N+1}=0\). 

\smallskip 
In contrast, the pLD-ASG is designed to study the type distribution of the present sample’s ancestor in the distant past, see \cite{C17} for $N<\infty$ and \cite{LKBW15} for the diffusion limit. In the case of a population of \(N\) individuals the pLD-ASG\footnote{As for the kASG, we speak here of the pLD-ASG instead of its line-counting process.}, which we denote by $L^N$, has the transition rates in \cite[eq. (4.7)]{C17} and the transition graph in Figure~\ref{Fig:R_finite}. Thus, $L^N$ is a special case of the bdc \(Z=\cZ^{{N}}\) whose dynamics is specified in~\eqref{calZ_rates}, but note that the state space is now $[N]$ rather than $[N-1]$. The parameters read
\begin{equation}\label{def:pLD-ASG_finite}
	\l_i^{L,N} \deq s^N \myfrac{N-(i-1)}{N}, \quad \mu_i^{L,N} \deq \myfrac{i-1}{N} + u^N \nu_1, \quad \text{and } \, \kappa^{L,N} \deq u^N \nu_0
\end{equation}
for \(i\in[N]\). \normalcolor

In line with the notation introduced for \(\cZ\) of Section~\ref{Sec:results}, we denote by \(w^N\deq(w_i^N)_{i\in[N]}\) the stationary distribution of \(L^N\), and by \(a^N\deq(a_i^N)_{i\in[N]_0}\) the vector of the corresponding tail probabilities, so \(a^N_i = \P(L^N_{\text{eq}}>i) \), where \(L^N_{\text{eq}}\) is a random variable with distribution \(w^N\).
The recursion \eqref{eq:rec_a_gen} for the tail probabilities now reads explicitly
\begin{equation}\label{rec_a_finite}
	\Big(\frac{i+1}{N}+s^N \frac{N-i}{N}+u^N\Big)a_i^N = \Big(\frac{i+1}{N}+u^N \nu_1\Big)a_{i+1}^N + s\frac{N-i}{N}a_{i-1}^N,
\end{equation}
together with the boundary conditions \(a_0^N=1\) and \(a^N_N=0\). 
\smallskip

Let us briefly hint at how the pLD-ASG is connected to the type composition of the ancestral population in the distant past. Let \(g^{N}(i)\), \(i\in[N]_0\), be the probability that the individual at present whose progeny will take over in the population at some later time is unfit, given that there are \(i\) unfit individuals at present. Alternatively, due to time homogeneity, \(g^{N}(i)\) is the probability that the population at present descends from an unfit individual in the distant past, given that, in the past, there were \(i\) unfit individuals. It was proved in \cite{C17} that
\begin{equation}\label{eq:g(y)}
	g^{N}(i)=1-(N-i)\sum_{j=1}^N a^N_j\frac{i^{\underline j}}{N^{\underline {j-1}}},\quad i\in[N]_0.
\end{equation}

Let us now recall the \emph{weak selection-weak mutation regime}~\eqref{special}~of the finite Moran model, that is, we assume that
\begin{equation}\label{wswm}
	\lim_{N\to\infty} N s^{N}=\s\geq 0\quad\text{and}\quad\lim_{N\to\infty}Nu^{N}=\t>0
\end{equation}
(compare \eqref{special}).
In this setting, it is well known that, if \(Y_0^{N}/N\to y\in[0,1]\) as \(N\to \infty\), the rescaled process \((Y^{N}_{Nt}/N)^{}_{t\geq 0}\) converges in distribution, as \(N\to\infty\), to the \emph{Wright--Fisher diffusion with selection and mutation}, that is, to the solution \(\cY\coloneqq (\cY_t)_{t\geq 0}\) of the stochastic differential equation
\begin{equation*}
	\dd \cY_t = \sqrt{2\cY_t(1-\cY_t)}\dd B_t+(-\s \cY_t(1-\cY_t)+ \tt(1-\cY_t)-\t\nu_0 \cY_t)\dd t,\quad t\geq 0,
\end{equation*}
with \(\cY_0=y\), where \((B_t)_{t\geq0}\) is a standard Brownian motion. Furthermore, \(\cY_t\) converges in distribution, as \(t\to\infty\), to a random variable \(\cY_{\text{eq}}\) that follows \emph{Wright's distribution}, which has density
\begin{equation*}\label{Wd}
	\pi(y)=\frac{1}{C} e^{-\s y} y^{\tt-1}(1-y)^{\t\nu_0-1},\quad y\in(0,1),
\end{equation*}
where \(C\coloneqq\int_0^1 e^{-\s y} y^{\tt-1}(1-y)^{\t\nu_0-1}\dd y\) is the normalising constant. This is the unique stationary distribution of \(\cY\). 

\smallskip
As in the finite-population model, we are interested in relations between the type composition of the population at stationarity, which we identify with the present, and the ancestral population in the distant past; they are linked to the diffusion versions of the kASG and the pLD-ASG, respectively. We denote by \(\cR \coloneqq (\cR_t)_{t\geq 0}\) the kASG in the diffusion limit (see \cite{BCH18,JSP,luigi}); taking this limit in the rates \eqref{def_kASGfinite}, we see that \(\cR\) is a continuous-time Markov chain with state space \(\N_0^\D\), absorbing states \(0\) and \(\D\), and transitions from \(i>0\) to
\begin{equation*}
	i+1 \text{ at rate } i\s,\quad i-1 \text{ at rate } i(i-1+\tt), \quad \text{and } \D \text{ at rate } i\t\nu_0
\end{equation*}
in accordance with eq~\ref{special}; see Figure~\ref{Fig:R} for the transition graph. Consequently, as observed already in Section~\ref{S1}, $\mathcal R$ is an instance of the bdk $X=X^\infty$ with parameters~\eqref{special}.
\begin{figure}[t]
	\centering
	\begin{tikzpicture}[->,shorten >=1pt,auto,node distance=3cm,semithick]
		\tikzstyle{every state}=[draw=black,text=black]
		
		\node[state,minimum size=1.2cm] (0) {0};
		\node[state,minimum size=1.2cm] (1) [right=1.5cm of 0] {1};
		\node[state,minimum size=1.2cm] (2) [right=1.5cm of 1] {2};
		\node		(m_dots) [right=1cm of 2] {\(\cdots\)};
		\node[state,minimum size=1.2cm] (3) [right=1cm of m_dots] {\(i\)};
		\node[state,minimum size=1.2cm] (4) [right=1.5cm of 3] {\(i+1\)};
		\node		(r_dots) [right=1cm of 4] {\(\cdots\)};
		
		\node (d1) [above=1.5cm of 1] {\(\D\)};
		\node (d2) [above=1.5cm of 2] {\(\D\)};
		\node (d3) [above=1.5cm of 3] {\(\D\)};
		\node (d4) [above=1.5cm of 4] {\(\D\)};
		
		\node[state,white,minimum size=1.4cm] (m_down) [below = -0.3cm of m_dots] {};
		\node[state,white,minimum size=1.4cm] (m_up) [above = -0.3cm of m_dots] {};
		\node[state,white,minimum size=1.4cm] (r_down) [below = -0.3cm of r_dots] {};
		\node[state,white,minimum size=1.4cm] (r_up) [above = -0.3cm of r_dots] {};
		
		\path
		(1) edge	[cmu]				node {\(\tt\)}				(0)
		(1) edge	[ck, right]			node {\(\t\nu_0\)}			(d1)
		(2) edge	[ck, right]			node {\(i\t\nu_0\)}			(d2)
		(3) edge	[ck, right]			node {\((i+1)\t\nu_0\)}		(d3)
		(4) edge	[ck, right]			node {\(2\t\nu_0\)}			(d4)
		(2) edge	[cmu, bend left]	node {\(2(1+\tt)\)}			(1)
		(1) edge	[cl, bend left]		node {\(\s\)}				(2)
		(3) edge	[cl, bend left]		node {\(i\s\)}				(4)
		(4) edge	[cmu, bend left]	node {\((i+1)(i+\tt)\)}		(3)
		
		(m_down) edge	[out=180,in=315,cmu]	node{}		(2)
		(2) edge		[in=180,out=45,cl]		node{}		(m_up)
		(3) edge		[in=0,out=225,cmu]		node{}		(m_down)
		(m_up) edge		[in=135,out=0,cl]		node{}		(3)
		
		(r_down) edge	[out=180,in=315,cmu]	node{}		(4)
		(4) edge		[in=180,out=45,cl]		node{}		(r_up)
		;
		
	\end{tikzpicture}

\centering
	\begin{tikzpicture}[->,shorten >=1pt,auto,node distance=3cm,semithick]
		\tikzstyle{every state}=[draw=black,text=black]
		
		\node[state,minimum size=1.2cm] (1) {1};
		\node		(m_dots) [right=1cm of 1]{\(\cdots\)};
		\node[state,minimum size=1.2cm] (2) [right=1cm of m_dots] {\(i-1\)};
		\node[state,minimum size=1.2cm] (3) [right=2cm of 2] {\(i\)};
		\node[state,minimum size=1.2cm] (4) [right=2cm of 3] {\(i+1\)};
		\node		(r_dots) [right=1cm of 4] {\(\cdots\)};
		
		\node[state,white,minimum size=1.4cm] (m_down) [below = -0.3cm of m_dots] {};
		\node[state,white,minimum size=1.4cm] (m_up) [above = 1.4cm of m_dots] {};
		\node[state,white,minimum size=1.4cm] (r_down) [below = -0.3cm of r_dots] {};
		\node[state,white,minimum size=1.4cm] (r_up) [above = 1.4cm of r_dots] {};
		
		\path 
		(2) edge	[above,cl]			node {\((i-1)\s\)}			(3)
		(3) edge	[bend left,cmu]		node {\((i-1)(i+\tt)\)}		(2)
		(3) edge	[above,cl]			node {\(i\s\)}				(4)
		(4) edge	[bend left,cmu]		node {\(i(i+1+\tt)\)}		(3)
		(3) edge	[bend right=70,ck]	node {\(\t\nu_0\)}			(2)
		(4) edge	[bend right=70,ck]	node {\(\t\nu_0\)}			(2)
		(4) edge	[bend right=70,ck]	node {\(\t\nu_0\)}			(3)
		(4) edge	[ck,out=90,in=0]	node {}						(2.6,2.3)
		(3) edge	[ck,out=90,in=0]	node {}						(2.6,2.3)
		(2) edge	[ck,out=90,in=0]	node {}						(2.6,2.3)
		
		(r_up) edge		[ck,out=180,in=90]		node {}				(4)
		(r_up) edge		[ck,out=180,in=90]		node {}				(3)
		(r_up) edge		[ck,out=180,in=90]		node {}				(2)
		(m_up) edge		[ck,out=180,in=90]		node {}				(1)
		
		(1) edge		[cl,out=0,in=180]		node {}				(1.3,0) 
		(4) edge		[cl,out=0,in=180]		node {}				(11.6,0) 
		(2.7,0) edge	[cl,out=0,in=180]		node {}				(2)
		(m_down) edge	[cmu,out=180,in=315]	node {}				(1)
		(r_down) edge	[cmu,out=180,in=315]	node {}				(4)
		(2) edge		[cmu,out=225,in=0]		node {}				(m_down)
		;
		
	\end{tikzpicture}
	\caption{The transition graphs of \(\cR\) (top) and $\cL$ (bottom).\label{Fig:R}}
\end{figure}
We denote by \(\beta\deq(\beta_i)_{i\geq 0}\) the absorption probabilities in \(0\) of \(\cR\) when starting from \(i\); in analogy with the finite case~\eqref{moment_dual}, the link between the absorption probabilities and the moments of the stationary distribution of the forward process \(\cY\) is given by the diffusion version of \eqref{cor_bY_finite_eq}, which reads
\begin{equation}\label{cor_bY_diff_eq}
	\beta_i = \P(\text{\(\cR\) absorbs at \(0\)}\mid \cR_0=i) = \E[\cY_{\text{eq}}^i]= \int_{0}^1 y^i\pi(y)\dd y,\quad i\geq 0.
\end{equation}
Likewise, the sampling recursion \eqref{rec_b_finite} turns into 
\begin{equation}\label{eq:rec_b_diff}
	(i-1+\s +\t )\beta_i = \s \beta_{i+1} + (i-1+\t \nu^{}_1)\beta_{i-1},\quad i>0, 
\end{equation}
complemented by the boundary conditions \(\beta_0=1\) and \(\limi \beta_i=0\) (see \cite{BCH18,JSP}).

\smallskip
Analogously, with the parameters converging as in~\eqref{wswm}, the sequence of time-rescaled pLD-ASG's $(L^N_{Nt})_{t\ge 0}$ converges in distribution to the ``diffusion limit'' version of the pLD-ASG, which we denote by \(\cL \deq (\cL_t)_{t \geq 0}\). This is a continuous-time Markov chain with state space \(\N\) and transitions rates given by the limit of the rates \eqref{def:pLD-ASG_finite} when sped up by the factor $N$ under the condition~\eqref{wswm}. In particular, it moves from \(i>0\) to
\begin{equation*}
	i+1 \text{ at rate } i\s,\quad i-1 \text{ at rate } (i-1)(i + \tt) +\t\nu_0 \mathbbm{1}_{\{i>1\}}, \quad \text{and } j\in[i-2] \text{ at rate } \t\nu_0
\end{equation*}
(note that the rate from 1 to 0 is zero). Consequently, in accordance with the discussion at the end of Section~\ref{S1}, $\mathcal L$ is an instance of the bdc $Z=Z^\infty$ with parameters~\eqref{special}. So $\cR$ and $\cL$ indeed form a pair in the sense of Definition~\ref{def:paired}, with the transition graphs in Figure~\ref{Fig:R}. The latter is in perfect analogy with Figure~\ref{Fig:XcZ}, now for $N=\infty$ and the special parameter choice~\eqref{special}.

Due to the quadratic death rate, the process \(\cL\) is positive recurrent and thus has a unique stationary distribution, which we denote by \(\omega \coloneqq (\omega_i)_{i>0}\), with \(\alpha \deq (\alpha_i)_{i\geq 0}\) the corresponding vector of tail probabilities, so \(\cL_{\text{eq}}\), a random variable that has the stationary distribution, satisfies \(\alpha_i = \P(\cL_{\text{eq}}>i) \). This \(\alpha\) is the unique solution to \emph{Fearnhead's recursion}, which is the diffusion version of \eqref{rec_a_finite}, that is,
\begin{equation*}\label{rec_a_diff}
	(i+1+\s +\t) \alpha_i = \s \alpha_{i-1}+(i+1+\t\nu^{}_1)\alpha_{i+1},\quad i>0,
\end{equation*}
together with the boundary conditions \(\alpha_0=1\) and \(\limi \alpha_i=0\); see \cite{LKBW15}. Let now \(\gamma(y), y\in[0,1]\), be the probability that the individual at present whose progeny will take over in the population at some later time is unfit, given that the proportion of unfit individuals at present equals \(y\). We then know from a classical result by Fearnhead \cite{F02} and Taylor \cite{Taylor_07} that 
\begin{equation*}\label{eq:gamma(y)}
	\gamma(y)=1-(1-y)\sumi \alpha_i y^i,\quad y\in[0,1].
\end{equation*}
In particular, \(\gamma(y)\) is the diffusion limit version of \(g^{N}(i)\) in \eqref{eq:g(y)}.

\subsection{The connection between kASG and pLD-ASG in the diffusion limit}\label{diffusionlimit}
As in the previous subsection, we consider the counting processes $\mathcal R$ and $\mathcal L$ of the kASG and of the pLD-ASG that arise in the diffusion limit of a two-type Moran model with selection and mutation under the parameter rescaling~\eqref{wswm}. Since $\mathcal R$ and $\mathcal L$ are paired in the sense of Definition~\ref{def:paired} with $N=\infty$ and with the parameters~\eqref{special}, we conclude from Theorem~\ref{thm_general} the following
\begin{corollary}\label{thm_diff}
	In the diffusion limit, the absorption probabilities \((\beta_i)_{i\geq 0}\) of the k-ASG \(\cR\) and the tail probabilities \((\alpha_i)_{i\geq 0}\) of the pLD-ASG \(\cL\) are related via
	\begin{align*}\label{eq:diff_thm_1}
		\frac{\beta_i}{\beta_2} &= \frac{\alpha_{i-2}-\alpha_{i-1}}{\alpha_0-\alpha_1}\frac{\prod_{j=1}^{i-2} (j+1+\tt)}{\s^{i-2}}, \quad i>1,\quad \text{and} \quad
		\alpha_{i} = \frac{\beta_{i+1}-\beta_{i+2}}{\beta_1-\beta_2}\frac{\s^{i}}{\prod_{j=1}^{i} (j+\tt)}, \quad i\geq 0.
	\end{align*}
\end{corollary}

\begin{remark}\label{rem:beta_one}
	\begin{enumerate}
		\item[a)] 
		The expression for \(\alpha_1\) can be already found in \cite{Taylor_07}, in the unnumbered display below \((28)\). There \(2s=\sigma\), \(2\mu_2 = \tt\) and \(1-\widetilde p=(\beta_2-\beta_3)/(\beta_1-\beta_2)\). This expression, proved with purely analytical means, remained mysterious and was actually the original motivation for this paper.\hfill
		\item[b)]
		Combining the recursion \eqref{eq:rec_b_diff} for \(i=2\) with the above expression for \(\beta_3/\beta_2\) yields \(\beta_1/\beta_2\) in terms of \(\alpha\):
		\begin{align*}
			\frac{\beta_1}{\beta_2} = \frac{1+\s+\t}{1+\tt} - \frac{(\alpha_1-\alpha_2)(2+\tt)}{(\alpha_0-\alpha_1)(1+\tt)}.
		\end{align*}\hfill
		\item[c)]
		Since \eqref{cor_bY_diff_eq} allows for explicit integral representations of the \((\beta_i)_{i\geq 0}\), Theorem~\ref{thm_diff} provides us with one such representation for the \((\alpha_i)_{i\geq 0}\) as well:
		\begin{equation*}
			\alpha_i = \frac{\int_{0}^1 y^{i+1}(1-y)\pi(y)\dd y}{\int_{0}^1 y(1-y)\pi(y)\dd y}\frac{\s^{i}}{\prod_{j=1}^{i} (j+\tt)},\quad i\geq 0.
		\end{equation*}
		In \cite{CM19}, a different integral representation for \(\alpha\) was found in terms of hypergeometric functions by means of analytical methods. 
	\end{enumerate}
\end{remark}

\subsection{The connection between the kASG and pLD-ASG in the (finite-)\texorpdfstring{\(N\)}{} Moran model}\label{ss:app_finite}
In contrast to the diffusion limit, where Corollary~\ref{thm_diff} links the paired processes \(\cR\) and \(\cL\), the processes \(R^N\) and \(L^N\) as such cannot be paired for finite $N$, simply because \(L^N\) has state space $[N]$, while the bdc $Z^{N-1}$ that is paired with $X^N:= R^N$ has state space $[N-1]$. It turns out, however, that $Z^{N-1}$ is a simple time change of a pLD-ASG $L^{N-1}$ whose parameters are closely related to those of $R^N$.
Specifically, we have the following
\begin{lemma}
	Let $R^N$ be the k-ASG for a finite population of $N$ individuals with selection coefficient $s^N$ and mutation rate $u^N$ as in~\eqref{def_kASGfinite}, and let $Z^{N-1}$ be the process that is paired with $X^N:=R^N$ according to Definition~\ref{def:paired}. Then the process $L^{N-1}:=(Z^{N-1}_{tN/(N-1)})_{t\ge 0}$ is a pLD-ASG with parameters as in~\eqref{def:pLD-ASG_finite}, now with $N-1$ in place of $N$. The selection coefficient $s^{N-1}$ and the mutation rate $u^{N-1}$ in $L^{N-1}$ are related to the corresponding parameters in $R^N$ via
	\begin{equation}\label{parrel}
		s^{N-1} = s^N, \quad u^{N-1} = u^N \frac N{N-1}.
	\end{equation}
\end{lemma}
\begin{proof} With
	$\lambda_i^{R,N}$, $\mu_i^{R,N}$ and $\kappa^{R,N}$ as in~\eqref{def_kASGfinite}, the process $(Z^{N-1}_{tN/(N-1)})_{t\ge 0}$ is a bdc with parameters
	\begin{equation*}
		\begin{split}
			\l_i & = \myfrac{N}{N-1} \lambda_i^{R,N} = s^N \myfrac{N-i}{N-1}, \quad \mu_i = \myfrac{N}{N-1} \mu_i^{R,N} = \myfrac{{i-1}}{N-1} + u^{N}\frac N{N-1} \nu_1, \\
			\text{and } \, \kappa & = \myfrac{N}{N-1} \kappa^{R,N} = u^{N}\frac N{N-1} \nu_0.
		\end{split}
	\end{equation*}
	With the choice~\eqref{parrel}, these parameters match~\eqref{def:pLD-ASG_finite} with $N-1$ in place of $N$.
\end{proof}
Since a constant time change does not affect an equilibrium distribution, Theorem~\ref{thm_general}~A immediately gives the ratios \(b_i^N/b_2^N\) of the absorption probabilities~\eqref{eq:bnb} of \(R^N\) in terms of the equilibrium tail probabilities \(a_j^{N-1}\) of \(L^{N-1}\). Likewise, since a constant time change does not affect absorption probabilities, Theorem~\ref{thm_general}~B expresses the \(a_i^{N-1}\) in terms of the \(b_j^N\). Explicitly, we therefore get
\begin{corollary}
	\label{coro_finite}
	In the (finite-)\(N\) Moran model with selection and mutation, the sampling and tail probabilities are connected via
	\begin{align*}
		\frac{b_i^N}{b_2^N} &= \frac{a^{N-1}_{i-2}-a^{N-1}_{i-1}}{1-a^{N-1}_{1}}\prod_{j=1}^{i-2} \frac{j+1+Nu^N\nu_1}{(N-(j+1))s^N}, \quad i\in[2:N],\\
		a_i^{N-1} &= \frac{b^{N}_{i+1}-b^{N}_{i+2}}{{b^{N}_1-b^{N}_2}}\prod_{j=1}^{i} \frac{(N-(j+1))s^{N-1}}{j+ Nu^{N-1}\nu_1}, \quad i\in[N- 2]_0,
	\end{align*}
	where the parameters of the $a_i$ and $b_{j}$ are related as in \eqref{parrel} as indicated by the superscripts.
\end{corollary}

It goes without saying that, likewise, the $a_i^N$ may be expressed in terms of the $b_j^{N+1}$ as well as $s^N$ and $u^N$ --- one just has to respect the shift in population size by~1 in the parameters of the $a_i$ and $b_j$.

\begin{remark}
	Since, for \(i>0\), one has \(\alpha_i = \lim_{N\to\infty} a^N_i\) (this can be seen, for example, by combining Lemma~2 and Theorem~3 in \cite{KHB13}) and \(\beta_i = \lim_{N\to\infty} b^N_i\) (see for example \cite{BCdG24}), Corollary~\ref{thm_diff} can be obtained as a consequence of Corollary~\ref{coro_finite} in the limit \(N\to\infty\) with the proper rescaling of parameters. \hfill
\end{remark}

\begin{remark}
	As for the diffusion case, since \eqref{cor_bY_finite_eq} allows for an explicit ``discrete integral'' representation of \((b_{i}^N)_{i\in[N]_0}\), Corollary~\ref{coro_finite} provides us with one such representation for the \((a^N_i)_{i\in[N]_0}\) as well:
	\begin{equation*}
		a_i^N = \frac{\sum_{k=0}^{N+1} \frac{k^{\underline{i+1}}}{(N+1)^{\underline{i+1}}}\left( 1 - \frac{k-i-1}{N-i}\right)\widetilde\pi_k}{\sum_{k=0}^{N+1} \frac{k}{N+1} \left( 1 -\frac{k-1}{N}\right)\widetilde\pi_k}\prod_{j=1}^{i}\frac{(N+1-(j+1))s^{N}}{j+(N+1)u^N\nu_1},\quad i\in[N-1]_0,
	\end{equation*}
	where \(\widetilde \pi_k = \prod_{j=1}^{k-1}\myfrac{(N+1-j)(j+Nu^N\nu_1)}{(j+1)\left((1+s^{N})(N-j)+Nu^N\nu_0\right)}\). Once again, in \cite{CM19} a different representation of \(a^N\) was found in terms of hypergeometric functions, with the help of analytical methods. \hfill
\end{remark}

\section{Appendix: A technical lemma}
The next result provides conditions under which bdk processes absorb in $\{0,\Delta\}$ almost surely, and ensures that the probability of absorption at $0$ vanishes as the initial state tends to~$\infty$.
\begin{lemma}\label{lemma:conditions_mu_lambda}
	Let \(\mathcal{X}\coloneqq(\mathcal{X}_t)_{t\geq 0}\) be a continuous-time Markov chain on \(\N^\Delta_0\) with transitions from \(i\in\N\) to 
	\begin{equation*}
		i+1\text{ at rate } \widehat{\ell}_i,\quad i-1 \text{ at rate } \widehat{m}_i,\quad\text{and } \, \D \text{ at rate } ik
	\end{equation*}
	with \(k>0\), \(\widehat{\ell}_i,\widehat{m}_i\geq 0\) for \(i\in\N\), and \(\widehat{m}_0=0\) (so \(\mathcal{X}\) does not leave \(\N_0^\D\)). We then have:
	\begin{enumerate}
		\item If \(\widehat{\ell}_i >0\) for every \(i\in\N\) and \(\sum_{i=0}^\infty \frac{i}{\widehat{\ell}_i}=\infty\), then \(\mathcal{X}\) is non-explosive and, in particular, \(\P(\mathcal{X} \text{ absorbs in }\{0,\D\}\mid \mathcal{X}_0=n)=1\) for every \(n\in\N_0^\D\);
		\item If \(\widehat{m}_i >0\) for every \(i\in\N\) and \(\sum_{i=0}^\infty \frac{i}{\widehat{m}_i}=\infty\), then \(\lim_{n\to\infty}\P(\mathcal{X} \text{ absorbs in } 0 \mid \mathcal{X}_0=n)=0\).
	\end{enumerate}
\end{lemma}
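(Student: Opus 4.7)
The plan is to treat both parts via a parallel analysis of one-step first-hitting probabilities of \(Y\), exploiting the fact that jumps inside \(\N\) are of size \(\pm 1\). For part~(2) I set \(h(n)\coloneqq\P(Y\text{ hits }0\mid Y_0=n)\) and \(p_i\coloneqq\P(Y\text{ hits }i-1\mid Y_0=i)\); the strong Markov property at the first visit to \(i-1\) gives the telescoping product \(h(n)=\prod_{i=1}^{n}p_i\). A first-jump decomposition of \(p_i\), conditioning on whether the jump out of \(i\) is up, down, or to \(\D\), produces the recursion
$$p_i(\ell_i+m_i+ik)=\ell_i p_{i+1}p_i+m_i,$$
from which dropping the unknown factor \(1-p_{i+1}\in[0,1]\) yields the crucial monotone bound \(p_i\leq m_i/(m_i+ik)\), and hence
$$-\log h(n)\geq\sum_{i=1}^{n}\log\bigl(1+ik/m_i\bigr).$$
For part~(1), Lemma~\ref{lemma:expl1} (whose hypothesis on \(q_Y\) is immediate since the rates are finite at each state) reduces non-explosion to showing that \(Y\) does not diverge to \(\infty\) almost surely, and divergence is equivalent to \(Y\) hitting every state. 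Running the symmetric analysis with \(q_i\coloneqq\P(Y\text{ hits }i+1\mid Y_0=i)\) gives \(\P(Y\text{ hits }m\mid Y_0=n)=\prod_{i=n}^{m-1}q_i\) together with the bound \(q_i\leq\ell_i/(\ell_i+ik)\).

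Both parts thus reduce to the elementary analytic fact that for non-negative \((x_i)_i\) with \(\sum_i x_i=\infty\) one has \(\sum_i\log(1+x_i)=\infty\); this follows from \(\log(1+x)\geq(\log 2)\min(x,1)\) combined with a short case split on whether \(\{i:x_i\geq 1\}\) is finite. Applying this with \(x_i=ik/m_i\) and the hypothesis \(\sum_i i/m_i=\infty\) proves \(h(n)\to 0\); applying it with \(x_i=ik/\ell_i\) and \(\sum_i i/\ell_i=\infty\) proves \(\P(Y\text{ hits every }m)=0\), whence Lemma~\ref{lemma:expl1} delivers non-explosion.

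To upgrade non-explosion into almost-sure absorption in \(\{0,\D\}\) for part~(1), observe that \(Y\) is almost surely bounded, since it misses some state \(m\). On the event \(\{\sup_t Y_t\leq M\}\) the process \(Y\) coincides with its truncation at \(M\), which is a CTMC on the finite state space \(\{0,1,\ldots,M,\D\}\) with \(\D\) directly reachable from every \(i\in[M]\) at rate \(ik>0\) and therefore absorbs almost surely; summing over \(M\) gives the claim. The main obstacle in the whole scheme is the derivation of the first-jump recursion for \(p_i\) and \(q_i\) and the extraction of the clean monotone bound by discarding the unknown \((1-p_{i+1})\) or \((1-q_{i-1})\) factor; once this is in place the rest is routine, with only a little care needed in part~(1) when some \(m_i\) vanish, since the embedded chain then lives on an effectively truncated part of \(\N\) on which the bound on \(q_i\) continues to hold.
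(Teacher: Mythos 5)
Your proof is correct, and it arrives at exactly the same quantitative bounds as the paper---namely \(\prod_i \ell_i/(\ell_i+ik)\) for upward passage and \(\prod_i m_i/(m_i+ik)\) for downward passage---but by a different mechanism. The paper compares \(Y\) with a pure-birth-with-killing process \(Y^\ell\) (part (1)) and a pure-death-with-killing process \(Y^m\) (part (2)), for which the relevant probability is an explicit product over the embedded chain; you instead derive the one-step first-passage recursion \(p_i(\ell_i+m_i+ik)=\ell_i p_{i+1}p_i+m_i\) (and its mirror image for \(q_i\)) directly for \(Y\) and obtain the same bound by discarding the nonnegative term \(\ell_i(1-p_{i+1})\) in the denominator. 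This buys you two things: it makes fully explicit the monotone comparison that the paper dispatches with ``a coupling argument'', and in part (1) it bounds the divergence probability of \(Y\) itself before feeding it into Lemma~\ref{lemma:expl1}, rather than that of an auxiliary process. You are also more careful than the paper on the last step of part (1): the paper simply writes \(\P(Y\text{ absorbs in }\{0,\D\}\mid Y_0=n)=1-\P(Y\text{ explodes}\mid Y_0=n)\), whereas you justify this by noting that non-exploding paths are a.s.\ bounded (they miss some level \(m\)) and that a bounded, non-absorbed trajectory keeps accumulating killing hazard at rate at least \(k\), so it is a.s.\ eventually sent to \(\D\); your truncation argument closes a small gap the paper leaves implicit. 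Two cosmetic points, neither affecting correctness: divergence of the embedded chain is not \emph{equivalent} to hitting every state (one can have \(\limsup_n W_n=\infty\) without divergence), but only the inclusion \(\{W_n\to\infty\}\subseteq\{Y\text{ hits }m\}\) for every \(m\) is needed, and that is what you actually use; and the final ``summing over \(M\)'' is better phrased as the monotone limit \(\P(\text{not absorbed},\,\sup_t Y_t\le M)\uparrow\P(\text{not absorbed})\), which vanishes because each term is zero and \(\P(\sup_t Y_t<\infty)=1\).
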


\begin{proof}
	(1) The statement is trivially true if \(n\in\{0,\D\}\). So assume \(n\in\N\) and consider the process \(\mathcal{X}^\ell\coloneqq(\mathcal{X}^\ell_t)_{t\geq 0}\) on \(\N^\D\) with transitions from \(i\in\N\) to 
	\begin{equation*}
		i+1\text{ at rate } \widehat{\ell}_i,\quad \text{and } \, \D \text{ at rate } ik.
	\end{equation*}
	A coupling argument tells us that
    \begin{equation}\label{eq:lemma_conditions_mu_lambda_1}
		\P(\mathcal{X} \text{ explodes in finite time} \mid \mathcal{X}_0 = n)
		\leq \P(\mathcal{X}^\ell \text{ explodes in finite time}\mid \mathcal{X}^\ell_0 = n).
	\end{equation}
	In order to diverge, the embedded Markov chain of \(\mathcal{X}^\ell\) must deterministically jump from \(i\) to \(i+1\) for all \(i\geq n\), so 
	\begin{equation*}
		\P(\mathcal{X}^\ell \text{ explodes in finite time}\mid \mathcal{X}^\ell_0 = n)
		\leq \prod_{i=n}^\infty \frac{\widehat{\ell}_i}{\widehat{\ell}_i+ik}
		= \mathrm{e}^{\sum_{i=n}^\infty \log\left(\frac{\widehat{\ell}_i}{\widehat{\ell}_i+ik}\right)}.
	\end{equation*}
	We have 
	\begin{equation*}
		\sum_{i=n}^\infty \log\Bigl(\frac{\widehat{\ell}_i}{\widehat{\ell}_i+ik}\Bigr)
		= -\sum_{i=n}^\infty \log\Bigl(1+\frac{ik}{\widehat{\ell}_i}\Bigr).
	\end{equation*}
	If \(\lim_{i\to\infty} \frac{ik}{\widehat{\ell}_i}\neq 0\), the series diverges; if \(\lim_{i\to\infty} \frac{ik}{\widehat{\ell}_i}=0\), then \(\log(1+\frac{ik}{\widehat{\ell}_i})\sim \frac{ik}{\widehat{\ell}_i}\), and the series diverges under the assumption \(\sum_{i=0}^\infty \frac{i}{\widehat{\ell}_i}=\infty\). Thus, using \eqref{eq:lemma_conditions_mu_lambda_1},
	\begin{equation*}
		\P(\mathcal{X} \text{ explodes in finite time}\mid \mathcal{X}_0 = n)=0,
	\end{equation*}
	and therefore 
	\begin{equation*}
		\P(\mathcal{X} \text{ absorbs in } \{0,\D\}\mid \mathcal{X}_0 = n)=1.
	\end{equation*}

	(2) Consider the process \(\mathcal{X}^m\coloneqq(\mathcal{X}^m_t)_{t\geq 0}\) on \(\N_0^\D\) with transitions from \(i\in\N\) to 
	\begin{equation*}
		i-1\text{ at rate } \widehat{m}_i,\quad \text{and } \, \D \text{ at rate } ik.
	\end{equation*}
	A coupling argument tells us that
	\begin{equation}\label{eq:lemma_conditions_mu_lambda_2}
		\P(\mathcal{X} \text{ absorbs in } 0 \mid \mathcal{X}_0=n)
		\leq \P(\mathcal{X}^m \text{ absorbs in } 0 \mid \mathcal{X}^m_0=n).
	\end{equation}
	In order to absorb in \(0\), \(\mathcal{X}^m\) must make every transition \(i\to i-1\) for \(i\in[n]\). Thus,
	\begin{equation*}
		\P(\mathcal{X}^m \text{ absorbs in } 0 \mid \mathcal{X}^m_0=n)
		= \prod_{i=1}^n \frac{\widehat{m}_i}{\widehat{m}_i+ik}
		= \mathrm{e}^{\sum_{i=1}^n \log\left(\frac{\widehat{m}_i}{\widehat{m}_i+ik}\right)}.
	\end{equation*}
	As in part (1),
	\begin{equation*}
		\lim_{n\to\infty}\sum_{i=1}^n \log\left(\frac{\widehat{m}_i}{\widehat{m}_i+ik}\right)=-\infty
		\qquad\text{whenever}\qquad 
		\sum_{i=1}^\infty \frac{i}{\widehat{m}_i}=\infty.
	\end{equation*}
	Together with \eqref{eq:lemma_conditions_mu_lambda_2}, this proves the claim.
\end{proof}

\section*{Acknowledgements}
We are greatly indebted to two anonymous reviewers who dedicated substantial time and expertise to the manuscript and made invaluable suggestions to improve it. We would also like to thank Gerold Alsmeyer for a helpful discussion. This work was funded by the Deutsche Forschungsgemeinschaft (DFG, German Research Foundation) — Project-ID 317210226 — SFB1283.

\end{document}